\DeclareMathOperator{\Ker}{Ker}
\DeclareMathOperator{\Coker}{Coker}
\DeclareMathOperator{\Sym}{Sym}
\newtheorem{theorem}{Theorem}[section]
\newtheorem{lemma}[theorem]{Lemma}
\newtheorem{prop}[theorem]{Proposition}
\newtheorem{corollary}[theorem]{Corollary}
\newtheorem{conjecture}[theorem]{Conjecture}
\renewcommand{\Im}{\mathop{\rm Im}\nolimits}
\numberwithin{equation}{section}
\title{ on the maximal rank conjecture for line bundles of extremal degree}
\author{Jie Wang}
\thanks{ Department of Mathematics, Ohio State University, Columbus, OH, 43210 (jwang@math.ohio-state.edu).}
\date{\today}
\begin{document}
\maketitle

\begin{abstract} We propose a new method, using deformation theory, to study the maximal rank conjecture. For line bundles of extremal degree, which can be viewed as the first case to test the conjecture, we prove that maximal rank conjecture holds by our new method. 

\end{abstract}

\bigskip

\section{introduction.}

A central problem in curve theory is to describe algebraic curves in a given projective space $\mathbb{P}^r$ ($r\ge3$) with fixed genus and degree. For instance, one wants to describe the ideal of a curve $C\subset\mathbb{P}^r$, and in particular, to know the Hilbert function of $C$, or in geometric terms, how many independent hypersurfaces of each degree $C$ lies on.  A major open problem here is the maximal rank conjecture, that appeared in Eisenbud-Harris \cite{EH2}: 
\begin{conjecture}\label{maximal}
(Maximal rank conjecture) For fixed $d$, $g$, $r\ge3$, let $C$ be a general curve of genus $g$ and $|L|$ be a general $g^r_d$ on $C$, then the multiplication map
\begin{eqnarray}\label{mmap}
\xymatrix{\Sym^kH^0(C,L)\ar[r]^-{\mu^k}&H^0(C,L^{k})}
\end{eqnarray}
is of maximal rank (either injective or surjective) for any $k\ge1$.
 \end{conjecture}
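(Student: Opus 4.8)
The plan is to prove maximal rank for the general pair $(C,|L|)$ by \emph{specialization combined with deformation theory}. Since the rank of $\mu^k$ is lower semicontinuous in flat families, it suffices to exhibit, for each admissible $(d,g,r,k)$, a single degenerate pair $(C_0,L_0)$ on which $\mu^k$ already has maximal rank: the general member of a smoothing family then inherits at least that rank, and as maximal rank is the largest possible the general $\mu^k$ must itself be of maximal rank. Concretely I would work over a one-parameter family $\pi\colon\mathcal{C}\to\Delta$ with smooth general fiber and special fiber $C_0$ a nodal (typically reducible) curve, carrying a line bundle $\mathcal{L}$ restricting to a $g^r_d$ on each fiber; the relative symmetric-power and pushforward sheaves assemble into a morphism of vector bundles over $\Delta$ whose generic rank I want to pin down from the special fiber alone.

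Next I would set up the cohomological bookkeeping on $C_0$. Writing $C_0=X_1\cup X_2$ meeting at a set $N$ of nodes, the normalization (Mayer--Vietoris) sequence
\begin{equation}
0\longrightarrow L_0^{\,k}\longrightarrow (L_0^{\,k}|_{X_1})\oplus (L_0^{\,k}|_{X_2})\longrightarrow (L_0^{\,k})|_{N}\longrightarrow 0
\end{equation}
reduces $h^0(C_0,L_0^{\,k})$ and the image of $\mu^k$ to data on the two components, while $\Sym^k H^0(C_0,L_0)$ decomposes compatibly through the restriction maps. The guiding choice is to let $X_2$ be a rational or elliptic tail, or a rational normal curve of small degree, attached at enough general points so that sections and their products are transparent on $X_2$ and the gluing conditions at $N$ are transverse. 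One then reduces the maximal-rank statement for $(C_0,L_0)$ to the same statement for the smaller invariants carried by $X_1$.

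The induction and its deformation-theoretic core come third. The induction runs on $(g,r,d)$, since peeling off tails lowers $g$ and/or $d$, with the extremal case $\rho(g,r,d)=0$ already established earlier as the anchor, and a secondary induction on $k$ using the base-point-free pencil trick and Green-type surjectivity to pass from $\mu^{k}$ to $\mu^{k+1}$ once the relevant $H^1$ vanishings hold. Deformation theory enters precisely in showing that the rank does \emph{not} drop in the limit: I would compute the first-order variation of the relative map $\mu^k_{\mathcal{C}/\Delta}$ along $\pi$ and show that any class in $\Ker\mu^k$ (resp.\ $\Coker\mu^k$) on $C_0$ that violates the expected dimension count is obstructed, i.e.\ fails to lift to first order, so that the non-maximal-rank locus has strictly positive codimension in $\Delta$ and hence avoids the general fiber. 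Equivalently, the cup-product pairing governing the obstruction must be nondegenerate on the offending subspaces.

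The main obstacle, I expect, is making this deformation argument \emph{uniform} across the entire admissible range rather than at the single anchor point. Two features are genuinely delicate: the \emph{transition regime}, where $\binom{r+k}{k}$ is nearly equal to $kd-g+1$ and neither injectivity nor surjectivity has any slack, so the obstruction pairing must be shown nondegenerate with no dimension to spare; and the \emph{higher-power regime} $k\gg 0$, where controlling $\Coker\mu^k$ demands sharp vanishing of $H^1$ on $C_0$ together with surjectivity of the Gaussian and multiplication maps under degeneration. Propagating maximal rank from the $\rho=0$ anchor to arbitrary fixed $(d,g,r)$ through the inductive smoothing is exactly the step where the transversality of the gluing at $N$ must be verified case by case, and producing a clean, case-free proof of that transversality is the crux on which the whole argument rests.
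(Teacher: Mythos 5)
You are attempting to prove a statement that the paper itself presents only as a \emph{conjecture}: the paper contains no proof of the full maximal rank conjecture, and its actual theorem is the very special case of line bundles of extremal degree ($Cliff(L)=Cliff(C)$), where Green's theorem $(4.e.1)$ reduces everything to surjectivity of the single map $\mu^2$. So there is no ``paper proof'' to match, and your proposal must stand on its own as a proof of an open problem --- which it does not. By your own admission the crux (uniform transversality of the gluing at the nodes across the whole admissible range, and the no-slack transition regime where $\binom{r+k}{k}\approx kd-g+1$) is left unverified, and your induction anchor ``the extremal case $\rho(g,r,d)=0$ already established earlier'' does not exist: extremal degree in this paper refers to the Clifford index, not to $\rho=0$, and even that case requires delicate choices of $(C_0,L_0)$, smoothability via limit linear series, and the identification of the unique component of $\mathcal{W}^r_d$ dominating $\mathcal{M}_g$ --- a point your semicontinuity argument is silent on, although it matters since $\mathcal{G}^r_d$ need not be irreducible and your degenerate pair must lie in the closure of the right component.

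The concrete step that fails is your deformation-theoretic core. You assert that any excess class in $\Ker(\mu^k(0))$ on the nodal curve $C_0$ is obstructed \emph{at first order}, i.e.\ that the first-order cup-product pairing is nondegenerate on the offending subspace. In exactly the degenerations you propose ($C_0=X\cup Y$ reducible nodal), this is false: for sections $\sigma,\tau\in H^0(L_0)$ with $\sigma|_X\equiv0$ and $\tau|_Y\equiv0$, products such as $(\alpha s_X^2)|_{C_0}\cdot(\beta s_Y^2)|_{C_0}$ lie in $\Ker(\mu^2(0))$ and satisfy $\delta_1=0$, so they lift to first order unobstructed; they die (if at all) only at order $n\ge2$, where the obstruction is computed by the twisted multiplication maps $\kappa_n\colon H^0(L_n)\otimes H^0(L_{-n})\to H^0(L_0^2)$ with $L_n=\mathcal{L}(nY)|_{C_0}$. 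Handling these higher-order obstructions $\delta_n$ --- showing some $\delta_n$, not $\delta_1$, has maximal rank --- is precisely the paper's new machinery and the reason its method works where a first-order argument stalls. Separately, your inductive scheme of peeling off rational or elliptic tails is essentially the classical Ballico--Ellia degeneration, which succeeded only for nonspecial linear series; for special series the case-by-case transversality you defer is exactly where that program has historically broken down, so deferring it leaves the proof with a hole at its load-bearing point.
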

In the case $|L|$ gives an embedding of $C$ into $\mathbb{P}^r$, $\Sym^kH^0(C,L)$ is the space of homogeneous polynomials of degree $k$ in $\mathbb{P}^r$ and $\Ker(\mu^k)$ is just the subspace consisting of those vanishing on $C$. By the Gieseker-Petri theorem, on a general curve $C$, $L^{k}$ is always non-special for $k\ge2$. Thus the dimension of the domain and target of $\mu^k$ are constants only depending on $k$, $d$, $r$ and $g$. Therefore, the maximal rank conjecture (MRC) simply says that the number of independent hypersurfaces containing $C$ is as small as it could be.

Since conjecture \ref{maximal} concerns conditions that are open, it suffices to prove the statement for one point on each component of the parameter space $\mathcal{G}^r_d$ which dominates $\mathcal{M}_g$, where
$$\mathcal{G}^r_d=\{(C,L,V)\ |\ L\text{ line bundle on }C,\ deg\ L=d,\ V\subset H^0(L),\ dimV=r+1\}$$  

However, if genus is large, it is very difficult to write down smooth curves satisfying the conjecture. The classic strategy to deal with this problem is to degenerate smooth curves to some special singular ones and try to verify the conjecture on these singular curves. By degenerating to reducible curves with embedded points and using a rather complex inductive argument, Ballico and Ellia proved the MRC for non-special linear series in \cite{BE}, \cite{BE1}, and \cite{BE2}.  So the main interest now is in the case of special linear series on general curves.

In another development, it is proved in Green and Lazarsfeld \cite{GL1} that any very ample line bundle $L$ on $C$ with 
$$deg\ L\ge 2g+1-2h^1(L)-Cliff(C)$$
or equivalently
$$Cliff(L)<Cliff(C),$$
is projectively normal, where $Cliff(C)$ is the clifford index of $C$:
$$Cliff(C):=\text{min}\{Cliff(A)\ |\ A\ \text{line bundle on}\ C,\ h^0(A)\ge2,\ h^1(A)\ge2\}$$
and
$$Cliff(A)=deg\ A-2r(A).$$
and for a general curve $C$, $Cliff(C)=\lfloor\frac{g-1}{2}\rfloor$. 

It is also showed by Green and Lazarsfeld that the bound $2g+1-2h^1(L)-Cliff(C)$  is the best possible. There are line bundles of degree one less than this bound which are not normally generated. We say a line bundle $L$ on $C$ has extremal degree if 
$$deg\ L=2g-2h^1(L)-Cliff(C),$$
that is,
$$Cliff(L)=Cliff(C).$$

On the other hand, if the maximal rank conjecture were true, we should still expect projective normality for general line bundles of extremal degree on general curves. Thus the extremal degree range should be thought of as the first case to test the maximal rank conjecture. There are four cases according to the value of $h^1(L)$:
\begin{enumerate}
\item[(1)] $h^1(L)=0$. $L$ is non special and the MRC follows from \cite{BE2}.
\vspace{.1cm}
\item[(2)] $h^1(L)=1$. If $g=2l$ even, $L$ is a $g^l_{3l-1}$ and $\rho=l-1$; if $g=2l+1$ odd, $L$ is a $g^l_{3l}$ and $\rho=l$.
\vspace{.1cm}
\item[(3)] $h^1(L)=2$. If $g=2l$ even, $L$ is a $g^{l-1}_{3l-3}$ and $\rho=0$; if $g=2l+1$ odd, $L$ is a $g^{l-1}_{3l-2}$ and $\rho=1$.
\vspace{.1cm}
\item[(4)] $h^1(L)\ge 3$.  The Brill-Noether number is negative. There are no such $g^r_d$'s $(r\ge3)$ on a general curve.
\end{enumerate}

In this paper, we prove the MRC for the remaining open cases $(2)$ and $(3)$.
\begin{theorem}\label{main thm}
Let $C$ be a general curve of genus $g$ ($g\ge 10$ if $g$ even, $g\ge 13$ if $g$ odd), $L$ be a general line bundle of extremal degree on $C$, then $(C,L)$ satisfies the MRC, or equivalently, is projectively normal.
\end{theorem}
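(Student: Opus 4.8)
The plan is to deduce projective normality by proving that the multiplication maps $\mu^k$ of (\ref{mmap}) are \emph{surjective}, and to reduce this to the two low-degree maps $\mu^2$ and $\mu^3$. For a general $(C,L)$ with $r\ge3$ and $\rho\ge0$ the series $|L|$ is very ample, so projective normality makes sense and is equivalent to surjectivity of every $\mu^k$. First I would record that it suffices to treat $k=2$ and $k=3$: for $k\ge4$ one has $H^1(L^{k-2})=0$, so $L^{k-1}$ is $0$-regular with respect to the globally generated $L$, whence $H^0(L^{k-1})\otimes H^0(L)\to H^0(L^{k})$ is surjective and surjectivity of $\mu^k$ follows from that of $\mu^{k-1}$ by induction (Castelnuovo--Mumford). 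A dimension count confirms that ``maximal rank'' really means ``surjective'' here: the domain $\Sym^2H^0(L)$ has dimension $\binom{h^0(L)+1}{2}$ and the target $H^0(L^2)$ dimension $2d-g+1$, and in case $(3)$ the inequality $\dim\Sym^2H^0(L)\ge\dim H^0(L^2)$ is tight exactly at $g=10$ (even) and $g=13$ (odd) — matching the stated bounds and identifying $\mu^2$ as the crux.

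To make the problem amenable to deformation, I would reformulate the surjectivity of $\mu^2$ through the kernel bundle $M_L=\Ker\!\big(H^0(L)\otimes\mathcal{O}_C\to L\big)$. Tensoring the evaluation sequence by $L$ and passing to cohomology identifies $\Coker(\mu^2)$ with $\Ker\!\big(H^1(M_L\otimes L)\to H^0(L)\otimes H^1(L)\big)$, so that $\mu^2$ is surjective precisely when this boundary map is injective; an analogous statement handles $\mu^3$. This turns the question into the control of a single cohomology group of a vector bundle, a quantity that varies well in families. It is here that the residual series $A=K-L$ should be exploited: when $h^1(L)=1$ it is a fixed effective divisor of degree $l-1$, and when $h^1(L)=2$ it is a gonality pencil $g^1_{l+1}$ computing $Cliff(C)$, and in either case $L=K-A$ gives extra structure for computing the relevant cohomology on degenerate fibres.

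The heart of the argument — the new, deformation-theoretic method — I would organize as an induction on $g$ in steps of $2$ (preserving parity and the case), with $g\in\{10,11,12\}$ and the odd base genera furnishing the start. I would place the general pair $(C,L)$ in a one-parameter family $\pi\colon\mathcal{C}\to\Delta$, with $L$ extending to $\mathcal{L}$, degenerating to a reducible nodal central fibre $C_0=C'\cup_p E$ whose distinguished component $C'$ carries an extremal bundle of the same type but lower genus, to which the inductive hypothesis applies; the multiplication map then becomes a morphism of sheaves $\Sym^2\pi_*\mathcal{L}\to\pi_*\mathcal{L}^2$ on $\Delta$. The essential point — and the break with the classical strategy of verifying the conjecture directly on a singular curve — is that one does \emph{not} attempt to compute $\mu^2$ on $C_0$. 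Instead I would analyze which sections of $L_0^2$ on $C_0$ are liftable in the family and show, using the inductive data on $C'$ together with the matching conditions at the node, that products of sections of $L$ already span the liftable part.

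The hard part will be exactly this last step. Upper semicontinuity of $\dim\Coker(\mu^2)$ runs the \emph{wrong} way: the cokernel can only jump up on the special fibre, so no naive limit argument suffices. The deformation-theoretic content is to prove that the excess cokernel concentrated on $C_0$ is not smoothable — that the obstruction class measuring the failure of surjectivity does not integrate to first order. Concretely this amounts to describing, via the deformation theory of the pair $(C_0,L_0)$, the connecting maps relating $H^1(M_{L_0}\otimes L_0)$ on the components to the corresponding group on the smoothing, and showing these comparison maps have maximal rank. Establishing that maximal rank, by means of the special geometry of the residual $A=K-L$ on the central fibre, is where essentially all of the work resides.
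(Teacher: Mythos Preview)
Your plan diverges from the paper's at two structural points, and the decisive technical device is absent.

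First, the paper does not argue by induction on $g$. The central fibre is $C_0=X\cup_pY$ with $g_X,g_Y$ each roughly $g/2$, and on each component the restricted line bundle has the concrete form $K(-mp)$ (case $h^1=2$) or $K(p-D)$ (case $h^1=1$); the argument is direct, not recursive. Splitting into two comparable pieces is essential: it is what gives the map $\kappa$ of Theorem~\ref{new} enough sections on both sides to be surjective. An inductive scheme $C'\cup_pE$ with a small tail would not provide this.

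Second, and more seriously, your description of the deformation step stops exactly where the paper's new idea begins. You propose to show that the excess cokernel ``does not integrate to first order''; but in this setting the first-order obstruction $\delta_1$ is in general \emph{not} of maximal rank, and one must climb to order $a\approx l/2$. The key observation (Theorem~\ref{new}) is that on a two-component nodal curve with smooth total space the higher obstruction maps $\delta_n$ are governed by the \emph{twisted} multiplication maps
\[
\kappa_n\colon H^0(C_0,L_n)\otimes H^0(C_0,L_{-n})\longrightarrow H^0(C_0,L_0^2),\qquad L_n:=\mathcal{L}(nY)\big|_{C_0},
\]
in the sense that $\Im(\kappa_n)$ lands in $\Im(\delta_n)$ modulo the images of lower $\delta_i$. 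Thus it suffices that $\bigoplus_{n=0}^{a}\kappa_n$ surject, which via Lemma~\ref{tech} splits into two concrete surjectivity statements on $X$ and $Y$ separately (the maps~(\ref{4.1}) and~(\ref{4.2})), established in Section~5 by explicit computations with the filtration $H^0(K(-ip))$ and the base-point-free pencil trick. None of this---the twists $L_n$, the cascade of $\kappa_n$'s, or the reduction to each component---is present in your proposal; the sentence ``showing these comparison maps have maximal rank'' is the entire problem restated, and the $M_L$ reformulation, while correct, does not by itself produce a mechanism for computing $\delta_n$ for $n>1$.

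A minor point: by Green's theorem~$(4.e.1)$ the maps $H^0(L)\otimes H^0(L^k)\to H^0(L^{k+1})$ are already surjective for $k\ge2$ in this degree range, so only $\mu^2$ needs to be handled, not $\mu^2$ and $\mu^3$.
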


By theorem $(4.e.1)$ of (\cite{Gr}), in our degree range, it suffices to prove $\mu^2$ in (\ref{mmap}) is surjective. We apply a new method, using deformation theory, to prove this fact. The general idea of this method is as follows. Instead of looking for some $(C_0, L_0)$ such that  $\mu^2$ is of maximal rank there, consider a one parameter family of pairs $(C_t, L_t)\in\mathcal{W}^r_d$, specializing to some $(C_0, L_0)$ ($C_0$ could be singular) with $\mu^2(0)$ not necessarily of maximal rank. Suppose moreover that all global sections of $L_0$ extends to $L_t$. Then one can construct obstruction maps 
$$\xymatrix{\delta_1: \Ker(\mu^2(0))\ar[r]&\Coker(\mu^2(0))}$$
and inductively 
$$\xymatrix{\delta_{n+1}:\Ker(\delta_n)\ar[r]&\Coker(\delta_n)}$$
such that an element $s\in \Ker(\mu^2(0))$ extends to $\Ker(\mu^2(t))$ modulo $t^{n+1}$ if and only if $\delta_i(s)=0$ for $i=0,...,n.$

For the decreasing sequence
$$\Ker(\mu^2(0))\supset\Ker(\delta_1)\supset...\supset\Ker(\delta_n)\supset...$$
if we can show that the vector space $V=\bigcap_{i}\Ker(\delta_i)$ consisting of elements which deform to $\Ker(\mu^2(t))$ to any order is of \textquotedblleft correct dimension", then $\mu^2(t)$ is of maximal rank. Said differently, it suffices to prove that $\delta_n$ is of maximal rank for some $n\in\mathbb{Z}_+$.

We find a nice singular curve $C_0$ on which the computation  of obstructions is surprisingly simple. Enough information in the obstruction maps $\delta_n$ is captured by the natural multiplication map
$$\xymatrix{\kappa_n:H^0(C_0,L_n)\otimes H^0(C_0,L_{-n})\ar[r]&H^0(C_0,L^2_0)}$$
in the following theorem:
\begin{theorem} \label{new}
  Let $\mathcal{L}\rightarrow\mathcal{C}$ be the total space of a one parameter family $(C_t,L_t)\in\mathcal{W}^{r}_{d}$ degenerating to $(C_0,L_0)$ with $C_0=X\cup Y$ a nodal curve consisting of two smooth curves of genus $g_X$, $g_Y$ meeting at a point $p$ and $\mathcal{C}$ is smooth. Write $L_n=\mathcal{L}(nY)|_{C_0}$.  Suppose all (global) sections of $L_n$ extend to $L_t$ for $|n|\le a$ and the natural map
\begin{eqnarray}\label{kappa}
\xymatrix{\bigoplus_{n=0}^{a}H^0(C_0,L_n)\otimes H^0(C_0,L_{-n})\ar[r]^-{\kappa=\oplus_{n}\kappa_n}&H^0(C_0,L^2_0)}
\end{eqnarray}
is surjective (resp. of rank $=dim_{\mathbb{C}}\Sym^2H^0(L_0)$) for some $a\in\mathbb{Z}_+$, then the multiplication map $\mu^2(t)$ is surjective (resp. injective) for small $t\ne0$. 
\end{theorem}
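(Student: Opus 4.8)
The plan is to reduce the statement to a single inclusion of subspaces of $H^0(C_0,L_0^2)$ and then to prove that inclusion by lifting sections across the family. First I would pass to the local ring of the base at $0$ and work over $R=\mathbb{C}[[t]]$, regarding $\pi_*\mathcal{L}$ and $\pi_*\mathcal{L}^2$ as $R$-modules. Since $L_t^2$ is non-special and, by the $n=0$ extension hypothesis together with semicontinuity, $h^0(L_0)=h^0(L_t)$, both push-forwards are free and compatible with base change; in particular the fibre of $\pi_*\mathcal{L}^2$ at $0$ is $H^0(C_0,L_0^2)$ and $\dim\Sym^2H^0(L_0)=\dim\Sym^2H^0(L_t)$. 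Let $\mathcal{I}\subseteq\pi_*\mathcal{L}^2$ be the image of $\mu\colon\Sym^2\pi_*\mathcal{L}\to\pi_*\mathcal{L}^2$ and let $\bar{\mathcal{I}}$ be its saturation, so that $W_0:=\bar{\mathcal{I}}\otimes k(0)\subseteq H^0(C_0,L_0^2)$ is the flat limit of $\Image(\mu^2(t))$, with $\dim_{\mathbb{C}}W_0=\operatorname{rank}\mu^2(t)$ for general $t\ne0$. The theorem then follows from the single containment $\Image(\kappa)\subseteq W_0$: in the surjective case $\operatorname{rank}\kappa=h^0(L_0^2)=h^0(L_t^2)$ forces $W_0=H^0(C_0,L_0^2)$, and in the injective case $\operatorname{rank}\kappa=\dim\Sym^2H^0(L_0)=\dim\Sym^2H^0(L_t)$ together with $\operatorname{rank}\mu^2(t)\le\dim\Sym^2H^0(L_t)$ forces equality, i.e. injectivity.

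The geometric input is the behaviour of the twists $L_n=\mathcal{L}(nY)|_{C_0}$. Because $C_0$ is a fibre, $\mathcal{O}_{\mathcal{C}}(X+Y)|_{C_0}$ is trivial, whence $\mathcal{O}_{\mathcal{C}}(Y)|_X=\mathcal{O}_X(p)$ and $\mathcal{O}_{\mathcal{C}}(Y)|_Y=\mathcal{O}_Y(-p)$; consequently the twists cancel, $L_n\otimes L_{-n}\cong L_0^2$, and $\kappa_n$ indeed lands in $H^0(C_0,L_0^2)$. The heart of the matter is $\Image(\kappa_n)\subseteq W_0$ for each $0\le n\le a$. Given $\sigma\in H^0(L_n)$ and $\tau\in H^0(L_{-n})$, the extension hypothesis, applied to both $n$ and $-n$ (each of absolute value $\le a$), lifts them to $\tilde\sigma\in H^0(\mathcal{C},\mathcal{L}(nY))$ and $\tilde\tau\in H^0(\mathcal{C},\mathcal{L}(-nY))$ restricting to $\sigma,\tau$ on $C_0$. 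Since the twists cancel, the product $\Phi=\tilde\sigma\tilde\tau$ is a section of $\mathcal{L}^2$ over the whole family, i.e. an element of $\pi_*\mathcal{L}^2$.

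I would then read $\Phi$ on the two kinds of fibre. On $C_0$ it restricts to $\kappa_n(\sigma\otimes\tau)$. On a nearby fibre $C_t$ with $t\ne0$, the divisor $Y\subset C_0$ is disjoint from $C_t$, so $\mathcal{O}_{\mathcal{C}}(\pm nY)$ is trivial along $C_t$; hence $\tilde\sigma|_{C_t}$ and $\tilde\tau|_{C_t}$ are genuine sections of $L_t$ and $\Phi|_{C_t}=(\tilde\sigma|_{C_t})(\tilde\tau|_{C_t})\in\Image(\mu^2(t))$. Thus $\Phi$ is a section of $\pi_*\mathcal{L}^2$ lying in $\mathcal{I}$ on every general fibre, so $\Phi\bmod\mathcal{I}$ is a torsion element of the torsion-free quotient $\pi_*\mathcal{L}^2/\bar{\mathcal{I}}$, forcing $\Phi\in\bar{\mathcal{I}}$; restricting to $C_0$ gives $\kappa_n(\sigma\otimes\tau)\in W_0$. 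Summing over $0\le n\le a$ yields $\Image(\kappa)=\sum_n\Image(\kappa_n)\subseteq W_0$, hence $\operatorname{rank}\kappa\le\dim W_0=\operatorname{rank}\mu^2(t)$, and the two conclusions follow as above.

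The step I expect to require the most care is the cohomological flatness underlying the first paragraph: one must guarantee that $\pi_*\mathcal{L}^2$ commutes with base change at $0$, so that $W_0$ genuinely sits inside $H^0(C_0,L_0^2)$ with $\dim W_0=\operatorname{rank}\mu^2(t)$, and that $h^0(L_0)=h^0(L_t)$, so that the two extremal dimensions appearing in the hypotheses match those of the generic fibre. Both hold once $L_0^2$ is non-special on the nodal $C_0$ and all sections of $L_0$ extend, which is part of the hypotheses, but these are precisely the places where the singular nature of $C_0$ could force a jump and must be verified. I would also emphasise that the argument produces only the inequality $\operatorname{rank}\mu^2(t)\ge\operatorname{rank}\kappa$; its force comes entirely from the hypotheses pinning $\operatorname{rank}\kappa$ to one of the two extremal values, and the overlaps among the various $\Image(\kappa_n)$ are harmless since only containment in $W_0$, not independence, is needed. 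The genuinely hard work, which I would defer to the applications, lies not in this theorem but in producing a degeneration $(C_0,L_0)$ for which the sections actually extend and $\kappa$ attains the required rank.
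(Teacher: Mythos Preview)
Your proof is correct and takes a genuinely different route from the paper's. The paper first builds, in Section~2, an obstruction-theoretic ladder: maps $\delta_{n+1}\colon\Ker(\delta_n)\to\Coker(\delta_n)$ obtained from the snake lemma on successive thickenings, with $\mu^2(t)$ of maximal rank once some $\delta_a$ is (Proposition~\ref{sufficient}). In Section~3 it then computes these maps explicitly on $C_0=X\cup Y$: for $\sigma,\tau\in H^0(L_0)$ vanishing on $X$ and $Y$ respectively, it factors lifts as $\tilde\sigma=\tilde\sigma's_X$, $\tilde\tau=\tilde\tau's_Y$ and identifies $\delta_1(\sigma\cdot\tau)$ with $\tilde\sigma'\tilde\tau'|_{C_0}$, and more generally shows that $\Im(\delta_n)$ contains $\Im(\kappa_n)$ modulo the earlier images, yielding a surjection $H^0(L_0^2)/\sum_n\Im(\kappa_n)\twoheadrightarrow\Coker(\delta_a)$. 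You bypass this inductive machinery entirely: by saturating the global image to $\bar{\mathcal{I}}$ and working with the flat limit $W_0=\bar{\mathcal{I}}\otimes k(0)$, you compress ``all orders at once'' into the single containment $\Im(\kappa)\subseteq W_0$, proved by exhibiting for each $\kappa_n(\sigma\otimes\tau)$ the global section $\Phi=\tilde\sigma\tilde\tau\in H^0(\mathcal{L}^2)$ that lies in $\Im(\mu^2)$ on the generic fibre. The geometric heart is identical---twists by $\pm nY$ cancel on $\mathcal{L}^2$ and are invisible away from $C_0$---but the packaging differs: the paper's version records \emph{which} order of obstruction each $\kappa_n$ kills (potentially useful elsewhere), while yours is shorter and more conceptual, needing no $\delta_n$'s at all. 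Your flagged concern about base change for $\pi_*\mathcal{L}^2$ at $t=0$ is exactly right and is the same hidden hypothesis the paper uses when asserting exactness of the bottom row of diagram~(\ref{obstruction}).
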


Notice that $\kappa$ only depends on $(C_0,L_0)$, not on the actual family specializing to it. It seems to the author that such a simple way to describe higher order obstructions is new and should have a lot more applications. 

The significance of theorem \ref{new} is that we are now reduced to finding a smoothable $(C_0,L_0)$ such that  all sections of $L_n$ extend to the nearby fiber and $\oplus_{n=0}^{a}\kappa_n$ (instead of $\kappa_0=\mu^2(0)$) is of required rank. By making a good choice of $(C_0,L_0)$, we manage to prove theorem \ref{main thm} by showing that $\kappa$ in theorem \ref{new} is surjective.  

This paper is organized as follows:

In section $2$, we set up some machinery which measures the obstructions for elements of $\Ker(\mu^k(0))$ to extend to $\Ker(\mu^k(t))$.

In section $3$, we compute the obstruction maps $\delta_n$ for the special degeneration described in theorem \ref{new} and give a proof of this theorem.

Section $4$ contains a proof of the main theorem \ref{main thm}.

Finally, in section $5$, we include some technical facts about canonical bundles on general curves which are needed in the proof of the main theorem.

{\bf Acknowledgements.} The author wishes to thank his advisor Herb Clemens for suggesting the problem and method, valuable discussions and constant support.
  
\section{infinitesimal study of the degeneracy loci}

Let $C_0$ be a reduced l.c.i curve over $\mathbb{C}$ and $L_0$ be a degree $d$ line bundle on $C_0$ with $h^0(L_0)=r+1$. By theorem $4.1$ in \cite{W}, the deformations of the pair $(C_0,L_0)$ are unobstructed. Let $S$ be the versal deformation space of $(C_0,L_0)$, then $S$ is smooth near $(C_0,L_0)$. Let $\mathcal{W}^r_d$ be the subvariety of $S$ consists of $(C,L)$ such that $h^0(L)\ge r+1$.
Consider the multiplication map
\begin{eqnarray}\label{multiplication}
\xymatrix{\Sym^kH^0(C,L)\ar[r]^-{\mu^k}&H^0(C,L^k)}.
\end{eqnarray}

We may think of this map as a morphism between two vector bundles (at least near the point $(C_0,L_0)$) over $\mathcal{W}^r_d$ as $(C,L)$ varies in $\mathcal{W}^r_d$. We are interested in the infinitesimal properties of the locus $D$ consisting of $(C,L)$ such that the multiplication map is not of maximal rank, i.e it is neither injective nor surjective. Our goal is to show that $D$ is a proper subvariety of $\mathcal{W}^r_d$ (assuming $\mathcal{W}^r_d$ irreducible near $(C_0,L_0)$).

Suppose now  that there is a (flat) one parameter family $(C_t,L_t)$ of pairs specializing to $(C_0,L_0)$ such that all sections of $L_0$ extend to $L_t$. If $\mu^k(0)$ is not of maximal rank at $(C_0,L_0)$, then the dimension of $\Ker(\mu^k(0))$ is bigger than expected. We would like to knock down this dimension by showing that only a expected number of independent sections of $\Ker(\mu^k(0))$ can extend to $\Ker(\mu^k(t))$. Thus $\mu^k(t)$ is of maximal rank for $t\ne0$.

Our goal in this section is to set up some machinery which measures the obstructions for elements of $\Ker(\mu^k(0))$ to extend to $\Ker(\mu^k(t))$.

To this end, let $(\mathcal{C},\mathcal{L})$ be the total space of the one parameter family and $(\mathcal{C}_n,\mathcal{L}_n)$ be the restriction of $(\mathcal{C},\mathcal{L})$ to $Spec\frac{\mathbb{C}[t]}{(t^{n+1})}=:Spec R_i$. Let $M_n=\Sym^kH^0(\mathcal{C}_n,\mathcal{L}_n)$, $N_n=H^0(\mathcal{C}_n,\mathcal{L}^k_n)$ and $\mu_n: M_n\rightarrow N_n$ be the multiplication map.

We have $M_{i+1}\otimes_{R_{i+1}}R_i=M_i$, $N_{i+1}\otimes_{R_{i+1}}R_i=N_i$ compatibly with $\mu_i$ for any $i\ge0$. 

\begin{lemma}\label{lemma5.1}
Under the above notations and assumptions, there exist obstruction maps 
$$\delta_{n+1}: \Ker(\delta_n)\rightarrow \Coker(\delta_n)$$
for $n\ge0$ such that $\delta_0=\mu_0$ and $s\in \Ker(\mu_0)$ can be lifted to $\Ker(\mu_n)$ if and only if $\delta_i(s)=0$ for $i=0,...,n.$
\end{lemma}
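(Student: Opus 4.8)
The plan is to convert the lifting problem into a sequence of purely linear-algebraic conditions by Taylor-expanding the multiplication map in the parameter $t$. Because all sections of $L_0$ (and of the relevant powers) extend, the modules $M_n$ and $N_n$ are free over $R_n$ and satisfy the base-change identities $M_{i+1}\otimes_{R_{i+1}}R_i=M_i$, $N_{i+1}\otimes_{R_{i+1}}R_i=N_i$ already recorded above. First I would fix compatible bases over the tower $R_0\leftarrow R_1\leftarrow\cdots$ and represent $\mu_n$ by a matrix $\mu^{(0)}+t\mu^{(1)}+\cdots+t^n\mu^{(n)}$, where each $\mu^{(j)}\colon M_0\to N_0$ is a constant $\mathbb{C}$-linear map and $\mu^{(0)}=\mu_0$. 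An element $s\in\Ker(\mu_0)$ then lifts to $\Ker(\mu_n)$ exactly when there exist $s_1,\dots,s_n\in M_0$ with $\mu_n\big(s+ts_1+\cdots+t^ns_n\big)=0$; collecting powers of $t$ this is the system
\[
\sum_{i+j=m}\mu^{(i)}(s_j)=0\qquad(E_m),\qquad s_0=s,\quad m=0,\dots,n,
\]
of which $(E_0)$ holds by hypothesis.

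I would define the $\delta_n$ by induction, maintaining the assertion that $s\in\Ker(\delta_{n-1})$ is equivalent to the solvability of $(E_0),\dots,(E_{n-1})$, i.e.\ to $s$ lifting to $\Ker(\mu_{n-1})$. For such an $s$, choose any solution $s_1,\dots,s_{n-1}$ and set
\[
\delta_n(s)=\Big[\,\mu^{(n)}(s)+\mu^{(n-1)}(s_1)+\cdots+\mu^{(1)}(s_{n-1})\,\Big]\in\Coker(\delta_{n-1}),
\]
the class of the obstruction to solving $(E_n)$ for $s_n$. Granting that this class is well defined, $(E_n)$ is solvable if and only if $\delta_n(s)=0$, which closes the induction: $\Ker(\delta_n)=\{\,s:\delta_i(s)=0,\ i\le n\,\}$ is precisely the set of $s$ lifting to $\Ker(\mu_n)$. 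The base case $n=1$ is immediate, since $\delta_1(s)=[\mu^{(1)}(s)]\in\Coker(\mu_0)$ involves no lower-order lift.

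The hard part, and the entire content of the lemma, is the well-definedness of $\delta_n$, namely its independence of the chosen lower-order lift $(s_1,\dots,s_{n-1})$; this is exactly what forces the target to be $\Coker(\delta_{n-1})$ rather than the coarser $\Coker(\mu_0)$. Given two solutions, their difference $(u_1,\dots,u_{n-1})$ satisfies the homogeneous equations $\sum_{i+j=m,\,j\ge1}\mu^{(i)}(u_j)=0$, and the substitution $v_b:=u_{b+1}$ turns these into $(E_0),\dots,(E_{n-2})$ for $v=u_1$, so that $u_1\in\Ker(\delta_{n-2})$ with lift-tail $(v_b)$. Reindexing shows that the resulting change in $\delta_n(s)$ is a representative of $\delta_{n-1}(u_1)\in\Image(\delta_{n-1})$, which therefore dies in $\Coker(\delta_{n-1})=\Coker(\delta_{n-2})/\Image(\delta_{n-1})$. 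Thus the construction and the well-definedness of $\delta_n$ must be run as a single simultaneous induction, the well-definedness and $\mathbb{C}$-linearity of $\delta_{n-1}$ feeding the definition of $\delta_n$. Equivalently, one may derive the same maps from the snake lemma applied to $0\to M_0\xrightarrow{\,t^{n+1}\,}M_{n+1}\to M_n\to0$ and its $N$-analogue, whose connecting homomorphism $\Ker(\mu_n)\to\Coker(\mu_0)$ governs lifting to the next order; descending this connecting map to $\Ker(\delta_n)\to\Coker(\delta_n)$ is the same bookkeeping, and I expect the indeterminacy analysis above to be the only genuine difficulty.
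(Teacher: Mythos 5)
Your proof is correct and takes essentially the same route as the paper: your order-by-order equations $(E_m)$ are exactly what the paper extracts from the snake-lemma connecting homomorphism of $0\to M_0\xrightarrow{t^{n+1}}M_{n+1}\to M_n\to 0$ and its $N$-analogue, and in both arguments the obstruction is pushed into $\Coker(\delta_{n-1})$ precisely so as to absorb the dependence on the choice of lower-order lift. Your explicit check that the indeterminacy is a representative of $\delta_{n-1}(u_1)$ for $u_1\in\Ker(\delta_{n-2})$, hence dies in $\Coker(\delta_{n-1})$, spells out a step the paper only asserts.
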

\begin{proof} 

For each $n\ge0$ consider
\begin{eqnarray}\label{obstruction}
\xymatrix{0\ar[r]&M_0\ar[r]^-{\cdot t^{n+1}}\ar[d]^{\mu_0}&M_{n+1}\ar[r]^-{p_{n+1}}\ar[d]^{\mu_{n+1}}&M_n\ar[r]\ar[d]^{\mu_n}\ar[r]&0\\
0\ar[r]&N_0\ar[r]^-{\cdot t^{n+1}}&N_{n+1}\ar[r]^-{q_{n+1}}&N_n\ar[r]&0}
\end{eqnarray}

Let $\delta_{n+1}':\Ker(\mu_{n})\rightarrow\Coker(\mu_0))$ be the connecting homomorphism of (\ref{obstruction}) from snake lemma. Fix $s_0\in\Ker(\mu_0)$ and $n\ge1$. Suppose $s_0$ has a lifting $s_{n}\in\Ker(\mu_n)$.
Let $s_{n+1}'\in M_{n+1}$ be any lifting of $s_n$. Suppose that $\mu_{n+1}(s_{n+1}')=t^{n+1}v$ with 
\begin{eqnarray}\label{solution}
v=\mu_0(s_0')+\delta_1(p_1(s_1'))+...+\delta_n(p_n(s_n')),
\end{eqnarray}
then 
$$s_{n+1}'-(t^{n+1}s_0'+t^ns_1'+...+ts_n')\in\Ker(\mu_{n+1}).$$
On the other hand, if (\ref{solution}) has no solution for any collection $s_j'\in M_j$ with $p_j(s_j')\in\Ker(\mu_{j-1})$, then $s_0$ has no lifting to $\Ker(\mu_{n+1})$.

Now, simply define $\delta_{n+1}(s_0)=\delta_{n+1}'(s_n)=v$ as an element of 
$$\frac{\Coker(\mu_0)}{\sum_{i=1}^n \Im(\delta_i')}=\Coker(\delta_n).$$
Then $\delta_{n+1}(s_0)$ does not depend on the choice of $s_n$ and is equal to zero in $\Coker(\delta_n)$ if and only if $s_0$ can be lifted to $\Ker(\mu_{n+1})$.

Finally, we check
$$\frac{\Coker(\mu_0)}{\sum_{i=1}^{n+1} \Im(\delta_i')}=\Coker(\delta_{n+1}).$$
\end{proof}

By theorem $3.1$ in \cite{W}, $(\mathcal{C}_1,\mathcal{L}_1)$ determines a tangent vector $\xi\in T_{(C_0,L_0)}S=\mathrm{Ext}^1_{\mathcal{O}_{C_0}}(\mathcal{P}^1_{C_0}(L_0),L_0)$ which annihilates $H^0(C_0,L_0)$. This means exactly that $\xi$ is tangent to $\mathcal{W}^r_d\subset S$ at $(C_0,L_0)$.
$\xi$ is a tangent direction such that the rank of $\mu^k_0$ does not increase if and only if
$$\delta_1: \Ker(\mu^k_0)\longrightarrow \Coker(\mu^k_0)$$
is zero. If this is the case, then every element in $\Ker(\mu^k_0)$ extends to $\Ker(\mu^k_1)$, thus to first order, the rank of the map $\mu^k$ does not increase (It does not decrease either, by lower semicontinuity of the rank). 

On the other hand, if $\delta_1$ is of maximal rank, there are two cases:  

1) $\delta_1$ is injective. No elements of $\Ker(\mu^k_0)$ will extend to $\Ker(\mu^k_1)$, thus for a general $t\ne0$, the multiplication  map (\ref{multiplication}) is injective at $(C_t,L_t)$.

2) $\delta_1$ is  surjective. Only a subspace of $\Ker(\mu^k_0)$ of dimension $dim_{\mathbb{C}}\Ker({\mu^k_0})-dim_{\mathbb{C}}Coker({\mu^k_0})=dim_{\mathbb{C}}\Sym^kH^0(L_0)-h^0(L_0^k)$ will extend to first order, therefore, for the nearby $(C_t,L_t)$, the multiplication map (\ref{multiplication}) is surjective.

Suppose now that $\delta_1$ is not of maximal rank. It is not possible to test if the nearby multiplication map is of maximal rank to first order. We have to look at the higher order obstruction maps $\delta_n$. 

By lemma \ref{lemma5.1}, any $s\in \Ker(\mu^k_0)$ can be extended to $\Ker(\mu^k_n)$ if and only if $\delta_i(s)=0$ for $i=0,...,n$. Let $n$ be the smallest integer such that $\delta_n$ is of maximal rank (if it exists). Since the index of $\delta_i$ $Ind\ \delta_i:=dim_{\mathbb{C}}\Ker(\delta_i)-dim_{\mathbb{C}}\Coker(\delta_i)$ is always constant for any $i$, we see that only a subspace of $\Ker(\mu^k_0)$ of expected dimension (0 if $\delta_n$ injective, $Ind\ \delta_n$ if $\delta_n$ surjective) can be extended to $\Ker(\mu^k_n)$. Therefore, the multiplication maps for nearby fibers are of maximal rank.

 We have proved the following proposition:
\begin{prop}\label{sufficient}
If $\delta_n$ is of maximal rank for some $n\in\mathbb{Z}_+$, the multiplication map $\mu^k$ is of maximal rank for nearby fibers.
\end{prop}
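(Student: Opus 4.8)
The plan is to reinterpret the obstruction maps as cutting out, order by order, the subspace of $\Ker(\mu^k_0)$ that survives to the generic fiber, and then to pin down the dimension of that subspace using the constancy of the index of $\delta_n$. First I would record the elementary but crucial fact that the index is preserved at each stage: since $\delta_{n+1}:\Ker(\delta_n)\to\Coker(\delta_n)$ is a linear map, passing to its kernel and cokernel drops both $\dim\Ker$ and $\dim\Coker$ by $\operatorname{rank}\delta_{n+1}$, so
\[
\dim_{\mathbb C}\Ker(\delta_{n})-\dim_{\mathbb C}\Coker(\delta_{n})=\dim_{\mathbb C}\Ker(\mu^k_0)-\dim_{\mathbb C}\Coker(\mu^k_0)=\dim_{\mathbb C}\Sym^kH^0(L_0)-h^0(L_0^k)
\]
for every $n$, the last equality being rank--nullity for $\mu^k_0:M_0\to N_0$. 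By Lemma \ref{lemma5.1} the condition $\delta_i(s)=0$ for $i=0,\dots,n$ is exactly $s\in\Ker(\delta_n)$ (the kernels are nested), so $\Ker(\delta_n)$ is precisely the space of elements of $\Ker(\mu^k_0)$ that lift to $\Ker(\mu^k_n)$.

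The heart of the argument is to bound $\dim_{\mathbb C}\Ker(\mu^k_t)$ for general $t$ by $\dim_{\mathbb C}\Ker(\delta_n)$. I would work over the discrete valuation ring $R=\mathbb C[[t]]$ (or the local ring of $\mathcal C$ along the special fiber). Because all sections of $L_0$ extend, $H^0(\mathcal L)$ is free of rank $r+1$ and specializes onto $H^0(L_0)$, so $M:=\Sym^kH^0(\mathcal L)$ is free of rank $\dim_{\mathbb C}\Sym^kH^0(L_0)$. The target $N:=H^0(\mathcal L^k)$ is torsion-free over $R$ (sections of a line bundle on the integral total space), hence $\Image(\mu^k)\subset N$ is torsion-free and the kernel $K:=\Ker(\mu^k:M\to N)$ is a \emph{saturated} free submodule of $M$. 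Saturation gives $\operatorname{Tor}_1^R(M/K,R/t)=0$, so $K/tK\hookrightarrow M/tM=\Sym^kH^0(L_0)$; its image $\bar K$ has dimension $\operatorname{rank}_RK=\dim_{\mathbb C}\Ker(\mu^k_t)$ for general $t$. Finally, each $\bar s\in\bar K$ lifts to a genuine element of $K$, whose truncation modulo $t^{n+1}$ lies in $\Ker(\mu^k_n)$ (multiplication is compatible with base change), so $\bar K\subseteq\Ker(\delta_n)$ and therefore $\dim_{\mathbb C}\Ker(\mu^k_t)\le\dim_{\mathbb C}\Ker(\delta_n)$.

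With these two ingredients the conclusion is immediate. If $\delta_n$ is injective then $\Ker(\delta_n)=0$, so $\Ker(\mu^k_t)=0$ and $\mu^k_t$ is injective. If $\delta_n$ is surjective then index constancy gives $\dim_{\mathbb C}\Ker(\delta_n)=\dim_{\mathbb C}\Sym^kH^0(L_0)-h^0(L_0^k)$, whence $\dim_{\mathbb C}\Image(\mu^k_t)=\dim_{\mathbb C}\Sym^kH^0(L_t)-\dim_{\mathbb C}\Ker(\mu^k_t)\ge h^0(L_0^k)\ge h^0(L_t^k)$ by upper semicontinuity of $h^0$, forcing $\mu^k_t$ to be surjective. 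The step I expect to be the main obstacle is the passage in the middle paragraph from ``lifts to every finite order $n$'' to ``contributes to the kernel on the actual nearby fiber'': the order-by-order obstruction calculus only controls artinian liftings, and since $H^0$ does not commute with base change one must verify that the geometric kernel over $R$ is free and saturated so that its specialization has exactly the rank $\dim_{\mathbb C}\Ker(\mu^k_t)$ rather than something larger. It is this torsion-freeness over the base, rather than the formal deformation theory alone, that licenses the comparison with the geometric generic fiber.
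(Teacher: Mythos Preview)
Your proposal is correct and follows essentially the same approach as the paper: index constancy of the $\delta_i$, identification of $\Ker(\delta_n)$ via Lemma~\ref{lemma5.1} with the elements of $\Ker(\mu^k_0)$ lifting to order $n$, and then the conclusion for nearby fibers. The paper's discussion preceding the proposition is in fact its entire proof and simply asserts the passage from ``lifts to order $n$'' to ``maximal rank on the nearby fiber'' without justification, whereas you fill this in with the saturation/torsion-freeness argument over the DVR; so your write-up is more complete than the paper's, not different in strategy.
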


\section{A nice degeneration}

To use proposition \ref{sufficient}, we need to compute the obstruction maps $\delta_n$, which in general is difficult. However, for the $k=2$ case, there is a nice degeneration on which the computation is surprisingly simple.

Let $\mathcal{L}\rightarrow\mathcal{C}$ be the total space of a one parameter family $(C_t,L_t)\in\mathcal{W}^{r}_{d}$ degenerating to $(C_0,L_0)$ with $C_0=X\cup Y$ a nodal curve consisting of two smooth curves of genus $g_X$, $g_Y$ meeting at a point $p$. Write $L_n=\mathcal{L}(nY)|_{C_0}$. Suppose all sections of $L_n$ extend to $L_t$ for $|n|\le a$. Notice that $L_n|_X=L_0|_X(np)$ and $L_n|_Y=L_0|_Y(-np)$, thus $L_n$ only depends on $(C_0,L_0)$, not on the family specializing to it.

The multiplication map
\begin{eqnarray}\label{max}
\xymatrix{\Sym^2H^0(C_0,L_0)\ar[r]^-{\mu(0)}&H^0(C_0,L_0^2)}
\end{eqnarray}
is usually not of maximal rank here, which means dimension of $\Ker(\mu(0))$ is bigger than it should be. 

There are some obvious elements in $\Ker(\mu(0))$. Let $W$ be the subspace of $\Sym^2H^0(L_0)$ spanned by
$$\{\sigma\cdot\tau\ :\ \sigma,\tau\in H^0(L_0),\ \sigma|_X\equiv0,\ \tau|_Y\equiv0,\}.$$
Clearly $W$ is a subspace of $\Ker(\mu(0))$. Let's compute the image of $W$ under
$$\xymatrix{\delta_1: \Ker(\mu(0))\ar[r]&\Coker(\mu(0)).}$$
Let $\tilde\sigma$, $\tilde\tau$ be sections of $\mathcal{L}$ which extend $\sigma$, $\tau$ respectively (They always exist, since by assumption, all sections of $L_0$ extend to $L_t$). Then $\tilde\sigma=\tilde\sigma's_X$, $\tilde\tau=\tilde\tau's_Y$, where $s_X$ (resp. $s_Y$) is a section of $\mathcal{O}_{\mathcal{C}}(X)$ (resp. $\mathcal{O}_{\mathcal{C}}(Y))$ which vanishes exactly on $X$ (resp $Y$) and $\tilde\sigma'$ (resp. $\tilde\tau'$) is a section of $\mathcal{L}(-X)$ (resp. $\mathcal{L}(-Y)$). By the construction of $\delta_1$,
$$\delta_1(\sigma\cdot\tau)=\frac{\tilde\sigma\tilde\tau}{t}|_{C_0}=\frac{\tilde\sigma's_X\tilde\tau's_Y}{t}|_{C_0}=\tilde\sigma'\tilde\tau'|_{C_0}\ \text{mod}\  \Im(\mu(0))$$
Therefore, the image of $W$ under $\delta_1$ is equal to the image of the composition
$$\xymatrix{H^0(L_1)\otimes H^0(L_{-1})\ar[r]^-{\kappa_1}&H^0(L^2_0)\ar[r]&\Coker(\mu(0)).}$$

$\delta_1$ in general is not injective. Let $\alpha$ be any section of $\mathcal{L}(-2X)$, $\beta$ be any section of $\mathcal{L}(-2Y)$. Then $(\alpha s_X^2)|_{C_0}\cdot(\beta s_Y^2)|_{C_0}\in W\subset \Ker(\mu(0))$, and $\delta_1((\alpha s_X^2)|_{C_0}\cdot(\beta s_Y^2)|_{C_0})=\frac{\alpha s_X^2\cdot\beta s_Y^2}{t}|_{C_0}\equiv0$. Clearly $(\alpha s_X^2)\cdot(\beta s_Y^2)\in \Ker(\mu_1)$ as in diagram (\ref{obstruction}), and therefore 
$$\delta_2((\alpha s_X^2)|_{C_0}\cdot(\beta s_Y^2)|_{C_0})=\frac{\alpha s_X^2\cdot\beta s_Y^2}{t^2}|_{C_0}=\alpha\beta|_{C_0}.$$ 
Thus the image of $\delta_2$ contains the image of the composition
$$\xymatrix{H^0(L_2)\otimes H^0(L_{-2})\ar[r]^-{\kappa_2}& H^0(L^2_0)\ar[r]&\Coker(\delta_1).}$$
Similarly, the image of $\delta_n$ contains the image of the composition
$$\xymatrix{H^0(L_n)\otimes H^0(L_{-n})\ar[r]^-{\kappa_n}&H^0(L^2_0)\ar[r]&\Coker(\delta_{n-1}).}$$
Therefore, we have a surjection

$$\xymatrix{\frac{H^0(L_0^2)}{\sum_{n=0}^a\Im(\kappa_n)}\ar@{->>}[r]&\Coker(\delta_a).}$$

The above analysis immediately gives a proof of theorem \ref{new} because if 
$$\xymatrix{\bigoplus_{n=0}^{a}H^0(C_0,L_n)\otimes H^0(C_0,L_{-n})\ar[r]^-{\kappa=\oplus_{n}\kappa_n}&H^0(C_0,L^2_0)}$$
is surjective (resp. of rank $=dim_{\mathbb{C}}\Sym^2H^0(L_0)$) for some $a\in\mathbb{Z}_+$, then $\delta_a$ is of maximal rank and therefore by proposition \ref{sufficient}, the multiplication map $\mu^2(t)$ is surjective (resp. injective) for small $t\ne0$. 

\section {proof of the main theorem}

We will prove theorem \ref{main thm} in this section.

Notice that in our degree and genus range, the MRC for $L$ is equivalent to the statement that $L$ is projectively normal. By theorem $(4.e.1)$ of \cite{Gr}, in our degree range, $H^0(L)\otimes H^0(L^k)\rightarrow H^0(L^{k+1})$ is surjective for any $k\ge2$. Thus to show such $L$ is projectively normal, it suffices to show the multiplication map $\mu^2$ in (\ref{multiplication}) is surjective. 

The idea is to make a good choice of $(C_0,L_0)$ such that the hypothesis of theorem \ref{new} is satisfied. We need the following lemma:
\begin{lemma}\label{tech}
Let $(C_0,L_0)$ be the same as theorem \ref{new}. Write $L_X=L_0|_X$ and $L_Y=L_0|_Y$. Suppose the restriction maps $H^0(C_0,L_n)\rightarrow H^0(X,L_X(np))$ and $H^0(C_0,L_n)\rightarrow H^0(Y,L_Y(-np))$ are surjective for $-a\le n\le a$. If 
\begin{eqnarray}\label{4.1}
\xymatrix{\bigoplus^a_{n=0}H^0(L_X(np))\otimes H^0(L_X(-np))\ar@{->>}[r]&H^0(L_X^2)}
\end{eqnarray}
and
\begin{eqnarray}\label{4.2}
\xymatrix{\bigoplus^a_{n=0}H^0(L_Y(np))\otimes H^0(L_Y((-n-1)p))\ar@{->>}[r]&H^0(L_Y^2(-p))}
\end{eqnarray}
are both surjective, then the natural map $\kappa$ in (\ref{kappa}) is surjective.

\end{lemma}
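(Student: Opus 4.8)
The plan is to split the target $H^0(C_0,L_0^2)$ by restriction to the component $X$ and to handle the two resulting pieces with the two surjectivity hypotheses separately. The basic tool is the exact sequence
$$0\to H^0(Y,L_Y^2(-p))\to H^0(C_0,L_0^2)\to H^0(X,L_X^2),$$
whose last map is restriction $\rho_X$ to $X$ and whose kernel is the space of sections of $L_0^2$ vanishing on $X$, identified with $H^0(Y,L_Y^2(-p))$ via the ideal sheaf $\mathcal{I}_X\cong\mathcal{O}_Y(-p)$. It therefore suffices to prove two things: (i) that $\rho_X$ maps $\Image(\kappa)$ onto $H^0(X,L_X^2)$, and (ii) that $\Image(\kappa)$ contains the kernel $H^0(Y,L_Y^2(-p))$. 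Granting both, for a general $w\in H^0(C_0,L_0^2)$ I would first subtract an element of $\Image(\kappa)$ matching $\rho_X(w)$, leaving a section vanishing on $X$, and then hit that remainder using (ii).

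For step (i), take $w\in H^0(X,L_X^2)$ and use (\ref{4.1}) to write $w=\sum_{n,j}a_{n,j}b_{n,j}$ with $a_{n,j}\in H^0(L_X(np))$ and $b_{n,j}\in H^0(L_X(-np))$, $0\le n\le a$. Since $L_n|_X=L_X(np)$ and $L_{-n}|_X=L_X(-np)$, the hypothesis that the restriction maps $H^0(C_0,L_n)\to H^0(X,L_X(np))$ are surjective lets me lift each $a_{n,j}$ to $\tilde a_{n,j}\in H^0(C_0,L_n)$ and each $b_{n,j}$ to $\tilde b_{n,j}\in H^0(C_0,L_{-n})$. Because multiplication commutes with restriction, $\kappa\big(\sum_{n,j}\tilde a_{n,j}\otimes\tilde b_{n,j}\big)$ lies in $\Image(\kappa)$ and restricts to $w$ on $X$, establishing (i).

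Step (ii) is the crux. Given $v\in H^0(Y,L_Y^2(-p))$, hypothesis (\ref{4.2}) writes $v=\sum_{n,j}c_{n,j}d_{n,j}$ with $c_{n,j}\in H^0(L_Y(np))$ and $d_{n,j}\in H^0(L_Y((-n-1)p))$. The idea is to realize each product via $\kappa_n$ of sections whose $X$-restrictions vanish. I would lift $c_{n,j}\in H^0(L_Y(np))=H^0(L_{-n}|_Y)$ to $\tilde c_{n,j}\in H^0(C_0,L_{-n})$ using surjectivity of $H^0(C_0,L_{-n})\to H^0(Y,L_Y(np))$, and lift $d_{n,j}$ to the unique $\tilde d_{n,j}\in H^0(C_0,L_n)$ vanishing identically on $X$. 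This last lift exists canonically because the sections of $L_n$ supported on $Y$ are exactly $H^0(Y,L_n|_Y(-p))=H^0(L_Y((-n-1)p))$; the extra twist by $-p$, matching the asymmetric twist in (\ref{4.2}), is precisely the shadow of the vanishing condition along $X$, and recognizing this is the key observation. Then $\kappa_n(\tilde d_{n,j}\otimes\tilde c_{n,j})$ vanishes on $X$ (as $\tilde d_{n,j}|_X=0$) and restricts to $c_{n,j}d_{n,j}$ on $Y$, so summing over $n,j$ yields an element of $\Image(\kappa)$ equal to $v$.

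Combining the steps proves surjectivity of $\kappa$. The only genuinely delicate point is the twist bookkeeping in step (ii): one must verify that sections of $L_n$ supported on $Y$ restrict to $H^0(L_Y((-n-1)p))$ rather than $H^0(L_Y(-np))$, and observe that node-compatibility is automatic since any $d_{n,j}\in H^0(L_Y((-n-1)p))$ already vanishes at $p$. Once the identifications $L_n|_X=L_X(np)$, $L_n|_Y=L_Y(-np)$, and $\mathcal{I}_X\cong\mathcal{O}_Y(-p)$ are in place, everything reduces to the two surjectivity hypotheses (\ref{4.1}), (\ref{4.2}) together with the surjectivity of the restriction maps assumed in the lemma.
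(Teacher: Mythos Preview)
Your proof is correct and follows essentially the same approach as the paper: first use (\ref{4.1}) and the surjectivity of restriction to $X$ to reduce to a section vanishing on $X$, then use (\ref{4.2}) to hit that section by extending the $H^0(L_Y((-n-1)p))$ factor by zero over $X$ and lifting the other factor arbitrarily via the assumed surjectivity of restriction to $Y$. Your framing via the exact sequence and your explicit identification of sections of $L_n$ supported on $Y$ with $H^0(L_Y((-n-1)p))$ make the twist bookkeeping slightly more transparent, but the substance is identical.
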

\begin{proof}
Let $s$ be any element of $H^0(C_0,L^2_0)$. Since (\ref{4.1}) is surjective, and any section of $L_X(np)$ extends to a section of $L_n$, we can modify $s$ by some element in the image of $\kappa$ such that $s|_X\equiv0$. Thus we can assume $s|_X\equiv0$, then $s|_Y\in H^0(L_Y^2(-p))$. Since (\ref{4.2}) is surjective, $s|_Y=\sum_{n=0}^a (x_ny_n)$ with $x_n\in H^0(L_Y(np))$, $y_n\in H^0(L_Y((-n-1)p))$. We can view $y_n$ as a section of $H^0(L_Y(-np))$ which vanishes at $p$, thus can be extended constantly $0$ to $X$ as a section of $L_n$. Still call it $y_n$. Extend $x_n$ arbitrarily to $X$ as a section of $L_{-n}$. Then $\sum_{n=0}^a\kappa(x_n\otimes y_n)=s$. 

\end{proof}

We now take $C_0=X\cup Y$, where $X$ and $Y$ are general curves of genus $g_X$, $g_Y$ meeting transversely at a general point $p$. In particular, $p$ is not a Weierstrass point of either $X$ or $Y$. We will divide the proof of the main theorem \ref{main thm} into two parts, according to the value of $h^1(L)$.

\subsection{\mbox{\boldmath$h^1(L)=2$} case} Since the residual series $N$ of $L$ is either a $g^1_{l+1}$ or $g^1_{l+2}$ depending on $g=2l$ even or $g=2l+1$ odd. We will work backwards by starting with a line bundle $N$ with $h^0(N)=1$ and take its residual. Here we take a simple $N$ whose restriction to $X$ and $Y$ are just suitable multiples of $\mathcal{O}_X(p)$ and $\mathcal{O}_Y(p)$, then take $L_0$ as the residual series of $N$. 

There are two subcases:
\begin{enumerate}
\item[(1)]  $g=2l$ even. Here we are dealing with $g^{l-1}_{3l-3}$'s. ($l\ge6$. $l=5$ needs a special argument and is proved in the appendix.) Let $g_X=g_Y=l$, $L_X=K_X(-\lceil\frac{l-1}{2}\rceil p)$ and $L_Y=K_Y(-\lfloor\frac{l-1}{2}\rfloor p)$.
\vspace{.3cm}
\item[(2)]  $g=2l+1$ odd. $L$ is a $g^{l-1}_{3l-2}$ ($l\ge6$). Take $g_X=l+1$, $g_Y=l$, $L_X=K_X(-\lceil\frac{l}{2}\rceil p)$, and $L_Y=K_Y(-\lfloor\frac{l}{2}\rfloor p)$
\vspace{.1cm}

\end{enumerate}

In both cases, it is easy to prove using the theory of limit linear series (see \cite{EH1}) that $(C_0, L_0)$ are smoothable in such a way all sections of $L_0$ extend to nearby. More precisely, the corresponding limit linear series on $C_0$ has aspects $V_X=(l-1)p+|K_X|$, $V_Y=(l-1)p+|K_Y|$ in case $g=2l$, and $V_X=(l-1)p+|K(-p)|$, $V_Y=lp+|K_Y|$ in case $g=2l+1$. They are both smoothable because the variety of limit liner series with the same ramification sequence as $(V_X,V_Y)$ at $p$ has expected dimensions.

\begin{prop} \label{hard}
For $(C_0,L_0)$ as described above, the natural map (\ref{kappa}) is surjective.
\end{prop}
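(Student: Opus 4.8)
The plan is to reduce Proposition \ref{hard} to Lemma \ref{tech} and verify its three hypotheses for the explicit pairs $(C_0,L_0)$ constructed above, after which the surjectivity of $\kappa$ in (\ref{kappa}) follows formally. Concretely I must establish: (i) the restriction maps $H^0(C_0,L_n)\to H^0(X,L_X(np))$ and $H^0(C_0,L_n)\to H^0(Y,L_Y(-np))$ are surjective for $-a\le n\le a$; and (ii) the two component-wise maps (\ref{4.1}) on $X$ and (\ref{4.2}) on $Y$ are surjective. Since $L_X=K_X(-mp)$ and $L_Y=K_Y(-m'p)$ are subcanonical with $m=\lceil\tfrac{l-1}{2}\rceil$, $m'=\lfloor\tfrac{l-1}{2}\rfloor$, both (i) and (ii) become statements about the canonical bundle twisted by multiples of the general point $p$, which is exactly the setting of the technical facts assembled in Section $5$.

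For (i) I would use the node sequence $0\to L_Y(-(n+1)p)\to L_n\to L_X(np)\to 0$ on $C_0=X\cup_p Y$ together with its companion exchanging $X$ and $Y$. A section of $L_n$ is a pair of sections on $X$ and $Y$ agreeing at the node, so surjectivity of the restriction to $X$ amounts to the evaluation $H^0(Y,L_Y(-np))\to\mathbb{C}_p$ being onto, i.e. to $p$ not being a base point of $|L_Y(-np)|=|K_Y(-(m'+n)p)|$. Because $p$ is general, hence non-Weierstrass, the canonical vanishing sequence at $p$ is $0,1,\dots,g_Y-1$, so $h^0(K_Y(-kp))=g_Y-k$ drops by exactly one at each step and $p$ is never a base point for $0\le k\le g_Y-1$; the symmetric statement on $X$ settles the restriction to $Y$. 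This confines the effective range to $|n|\le\lfloor\tfrac{l-1}{2}\rfloor$, which is harmless since $H^0(L_X(-np))=0$ beyond it, so a single $a$ serves for (i) and (ii) simultaneously.

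The heart of the matter is (ii). On the $X$ side let $\phi:\Sym^2H^0(K_X)\to H^0(K_X^2)$ be the canonical multiplication map, surjective by Noether's theorem since a general $X$ is non-hyperelliptic. Filter both sides by order of vanishing at $p$, writing $V^{\ge v}=H^0(K_X^2(-vp))$ and letting $P^{\ge 2m}\subset\Sym^2H^0(K_X)$ be spanned by products $\omega_\alpha\omega_\beta$ of canonical forms whose vanishing orders at $p$ sum to at least $2m$. A direct check, using the allowed ranges of $n$, shows that the image $I$ of (\ref{4.1}) is precisely $\phi(P^{\ge 2m})$, while the target is $V^{\ge 2m}$; the containment $I\subseteq V^{\ge 2m}$ is immediate. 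I would prove equality by the rank--nullity count $\dim I=\dim P^{\ge 2m}-\dim(\Ker\phi\cap P^{\ge 2m})=\dim V^{\ge 2m}$, equivalently by showing $P/P^{\ge 2m}\to V/V^{\ge 2m}$ is injective. This is where Section $5$ must enter: it has to pin down the orders of vanishing at the general point $p$ of the Petri quadrics spanning $\Ker\phi$, so that $\dim(\Ker\phi\cap P^{\ge 2m})$ is computed exactly. The map (\ref{4.2}) on $Y$ is treated identically, the only change being the shift from $2m'$ to $2m'+1$ produced by the extra twist by $p$ at the node.

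The main obstacle is exactly this last dimension count, and it cannot be avoided by a naive order-by-order argument. Single products of canonical forms reach vanishing order at most $2g_X-2$ at $p$, whereas $H^0(K_X^2(-2mp))$ contains sections of vanishing order up to $3l-4$, so the top $\approx l-2$ orders of the target are produced only through cancellation of leading terms among the quadric relations. Controlling these cancellations, equivalently determining how many of the $\binom{l-2}{2}$ independent quadrics through the canonical curve survive in $P^{\ge 2m}$, is the delicate step; it is also why the estimate is too tight in low genus, and this is the reason the case $l=5$ of the even series is excluded here and handled separately in the appendix.
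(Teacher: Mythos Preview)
Your overall architecture coincides with the paper's: reduce to Lemma~\ref{tech} and observe that, since $L_X=K_X(-mp)$ and $L_Y=K_Y(-m'p)$, both the restriction hypothesis (i) and the maps (\ref{4.1}), (\ref{4.2}) become statements about $K$ twisted by multiples of the non-Weierstrass point $p$. Your check of (i) is correct and is exactly what the paper leaves implicit. You also correctly unwind (\ref{4.1}) into the map
\[
m_{2m}:\ \sum_{i+j=2m,\ i,j\ge 0}H^0(K_X(-ip))\otimes H^0(K_X(-jp))\longrightarrow H^0\bigl(K_X^2(-2m\,p)\bigr),
\]
with $2m\in\{l-1,l\}$ according to parity, and likewise for (\ref{4.2}).

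The gap is in how you propose to establish surjectivity of $m_{2m}$. You recast it as a rank--nullity problem for the full canonical multiplication $\phi:\Sym^2H^0(K_X)\to H^0(K_X^2)$, filtered by $p$-order, and propose to compute $\dim(\Ker\phi\cap P^{\ge 2m})$ by determining the $p$-orders of the Petri quadrics; you then say ``this is where Section~5 must enter''. But Section~5 does \emph{not} do this, and the computation you outline is neither carried out nor easy: controlling how the $\binom{l-2}{2}$ quadric relations meet the $p$-filtration is precisely the delicate cancellation problem you yourself flag as the main obstacle, and you give no mechanism to resolve it.

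The paper bypasses this entirely. Lemmas~\ref{lemma5.1} and~\ref{lemma5.2} prove surjectivity of $m_{l-1}$ and $m_l$ by the base-point-free pencil trick applied to the pencil $K_X(-(l-2)p)$: the trick shows that $H^0(K_X)\otimes H^0(K_X(-(l-2)p))\to H^0(K_X^2(-(l-2)p))$ has one-dimensional kernel, hence is surjective, and one reads off an explicit basis of the target in the form $\omega_{l-1}\omega_i,\ \omega_{l-2}\omega_j$. From this basis, surjectivity of $m_{l-1}$ is immediate and $m_l$ has corank at most one; the last corank is then killed by specializing $(X,p)$ so that $K_X(-(l-2)p)$ acquires a single base point $q$ and exhibiting $\omega_{l-3}^2$ as the missing element. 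No analysis of the quadric ideal is needed. So your plan is not wrong in spirit, but the step you defer to Section~5 is not supplied there, and the paper's pencil-trick argument is both simpler and complete.
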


\begin{proof}

Case (1). Here $L_n|_X=K_X((-\lceil\frac{l-1}{2}\rceil+n)p)$, $L_X^2=K_X^2(-2\lceil\frac{l-1}{2}\rceil p)$. Apply lemma \ref{lemma5.1} or \ref{lemma5.2} for $a=-\lfloor\frac{l-1}{2}\rfloor$, $L_X$ satisfies (\ref{4.1}). Meantime, $L_Y^2(-p)=K_Y^2(-(2\lfloor\frac{l-1}{2}\rfloor+1) p)$ and $2\lfloor\frac{l-1}{2}\rfloor+1$ is either $l-1$ or $l$ depending on $l$ even or odd. Again by lemma \ref{lemma5.1} or \ref{lemma5.2}, $L_Y$ satisfies (\ref{4.2}). 

Case (2). Take $a=\lceil\frac{l}{2}\rceil-1$. $L_n|_X=K_X((-\lceil\frac{l}{2}\rceil+n)p)$, $L_X^2=K_X^2(-2\lceil\frac{l}{2}\rceil p)$, $L_Y^2=K_Y^2(-2\lfloor\frac{l}{2}\rfloor p)$.

If $l$ even, by lemma \ref{lemma5.2}, we see that $L_Y$ satisfies (\ref{4.1}) and $L_X$ satisfies (\ref{4.2}) (notice $g_X=l+1$).

If $l$ odd, again by lemma \ref{lemma5.2}, $L_X$ satisfies (\ref{4.1}) and $L_Y$ satisfies (\ref{4.2}).

Thus in either case, the hypotheses of lemma \ref{tech} are satisfied, and therefore $\kappa$ in (\ref{kappa}) is surjective.
\end{proof}

\subsection{\mbox{\boldmath$h^1(L)=1$} case}

We are dealing with the residual series of $g^0_{l-1}$'s if $g=2l$ and $g^0_{l}$'s if $g=2l+1$, so smoothability is not a problem. Again there are two subcases:
\begin{enumerate}
\item[(1)] $g=2l$ even. $L$ is a $g^l_{3l-1}$. Let $g_X=g_Y=l$, $D_X$ (resp. $D_Y$) be a divisor consisting of $\lceil\frac{l-1}{2}\rceil$ (resp. $\lfloor\frac{l-1}{2}\rfloor$) general points on $X$ (resp. $Y$).
Take $L_X=K_X(p-D_X)$, $L_Y=K_Y(p-D_Y)$.
\vspace{.3cm}
\item[(2)] $g=2l+1$ odd. $L$ is a $g^l_{3l}$. Let $g_X=l+1$, $g_Y=l$ $D_X$ a general divisor of degree $\lceil\frac{l}{2}\rceil$ on $X$ and $D_Y$ a general divisor of degree $\lfloor\frac{l}{2}\rfloor$ on $Y$. Take $L_X=K_X(p-D_X)$, $L_Y=K_Y(p-D_Y)$.

\end{enumerate}

\begin{prop} \label{harder}For $(C_0,L_0)$ as described above, the natural map (\ref{kappa}) is surjective.

\end{prop}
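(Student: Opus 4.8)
The plan is to reduce Proposition \ref{harder} to Lemma \ref{tech} exactly as was done in the $h^1(L)=2$ case, so the whole task becomes verifying the two surjectivity hypotheses (\ref{4.1}) and (\ref{4.2}) for the bundles $L_X=K_X(p-D_X)$ and $L_Y=K_Y(p-D_Y)$. First I would check the two restriction-surjectivity conditions of Lemma \ref{tech}. Here $L_n|_X=K_X(p-D_X+np)=K_X((n+1)p-D_X)$ and $L_n|_Y=K_Y((1-n)p-D_Y)$; since $p$ is a general (non-Weierstrass) point and $D_X,D_Y$ are general divisors, the relevant $H^1$'s controlling the restriction maps $H^0(C_0,L_n)\to H^0(X,L_X(np))$ and $\to H^0(Y,L_Y(-np))$ vanish for $|n|\le a$, so the restriction maps are surjective in the stated range. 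This is the routine part and mirrors the even/odd case already handled.

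The substance is to produce a multiplication-map surjectivity statement for a twisted canonical bundle on a single general curve that plays the role of Lemma \ref{lemma5.2}. Concretely, I would set $a$ to the same value as in Proposition \ref{hard} (namely $a=\lfloor\frac{l-1}{2}\rfloor$ when $g=2l$ and $a=\lceil\frac{l}{2}\rceil-1$ when $g=2l+1$) and prove a clean statement of the shape: for a general curve $Z$ of the relevant genus, a general effective divisor $D$ of the relevant degree, and a general point $p$,
\begin{equation}\label{plan-key}
\xymatrix{\bigoplus_{n=0}^{a}H^0(K_Z(np+p-D))\otimes H^0(K_Z(-np+p-D))\ar@{->>}[r]&H^0(K_Z^2(2p-2D)).}
\end{equation}
The image divisors $D_X,D_Y$ only shift degrees by a controlled amount, so after matching degrees the target $L_X^2=K_X^2(2p-2D_X)$, $L_Y^2(-p)=K_Y^2(p-2D_Y)$ line up with (\ref{4.1}), (\ref{4.2}) for appropriate assignment of $X,Y$ to the two roles, split according to the parity of $l$ exactly as in Proposition \ref{hard}.

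The hard part will be establishing (\ref{plan-key}): it is a genuine surjectivity of a multiplication map onto the full (here non-special, by degree count for $K_Z^2(2p-2D)$) space of sections of a quadratic bundle, and it cannot be read off from base-point-freeness alone. My plan is to reduce it to the deformation/base-point-free-pencil style of argument the paper already invokes for canonical bundles, pushing the verification into Section $5$: one wants to use that $K_Z(p-D)$ and its small positive and negative twists by $p$ are projectively normal-like enough that the sum of the image spaces $\Im(\kappa_n)$ fills $H^0(K_Z^2(2p-2D))$. The key technical input is a base-point-free pencil argument (following Green's $H^0(K)\otimes H^0(K)\to H^0(K^2)$ surjectivity for non-hyperelliptic curves, adapted to the twist by $p-D$) together with an induction on $n$ showing each successive summand $H^0(K_Z(np+p-D))\otimes H^0(K_Z(-np+p-D))$ contributes the sections with a pole of order exactly $n$ along $p$; genericity of $p$ and $D$ (so that no unexpected ramification or special secant behavior occurs) is what makes the dimension count tight. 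I expect this cohomological/multiplication-map surjectivity, rather than the bookkeeping in Lemma \ref{tech}, to be the real obstacle, and I would isolate it as the technical lemma proved in the appendix/Section $5$.
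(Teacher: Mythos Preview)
Your reduction is exactly the paper's: apply Lemma~\ref{tech}, so that everything comes down to a single-curve surjectivity statement of the shape~\eqref{plan-key}. After the substitution $M=K_Z((d+2)p-D)$ with $d=\deg D$, your \eqref{plan-key} is literally Lemma~\ref{lemma5.3} (the two targets $L_X^2$ and $L_Y^2(-p)$ become $M^2(-(l+1)p)$ and $M^2(-(l+2)p)$, which is why the paper states both \eqref{easy} and \eqref{diff}). The case split on the parity of $l$ and the assignment of $X,Y$ to the two roles also match the paper.

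Where your plan diverges is the proof of the technical lemma itself. Your proposed mechanism---an induction on $n$ in which each summand $H^0(K_Z((n+1)p-D))\otimes H^0(K_Z((-n+1)p-D))$ contributes the sections of a fixed vanishing order at $p$---is not what the paper does, and a naive filtration-by-vanishing-order argument of that kind typically only yields corank~$\le 1$, not full surjectivity. The paper's proof of Lemma~\ref{lemma5.3} instead mimics Lemma~\ref{lemma5.2}: first use the base-point-free pencil trick with the pencil $|M(-lp)|$ to produce an explicit spanning set analogous to~\eqref{basis}, which already gives \eqref{easy} surjective and \eqref{diff} of corank at most $1$; then, to kill the last dimension in \eqref{diff}, \emph{specialize} $(X,D,p)$ to a non-general configuration in which $M(-lp)$ acquires a single base point $q$ while $M(-(l-1)p)$ remains base-point-free. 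In that degeneration $\omega_{l-1}^2$ lands in the image but lies outside the codimension-$1$ subspace $V$ (everything in $V$ vanishes at $q$), so the map is surjective there and hence generically. Producing such $(X,D,p)$ is itself nontrivial and is where Theorems~\ref{fixex}--\ref{movex} (Brill--Noether with prescribed ramification) are invoked. This specialization step is the essential idea your sketch is missing; without it the ``last dimension'' does not fall to a pole-order count.
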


\begin{proof}

Case (1). Let $M_X=L_X((\lceil\frac{l-1}{2}\rceil+1) p)=K_X((\lceil\frac{l-1}{2}\rceil+2) p-D_X)$, $M_Y=L_Y((\lfloor\frac{l-1}{2}\rfloor+1) p)=K_Y((\lfloor\frac{l-1}{2}\rfloor+2) p-D_Y)$.

If $l$ even, $L_X^2=K_X^2(2p-2D_X)=M^2_X(-(l+2)p)$. By lemma \ref{lemma5.3}, $L_X$ satisfies (\ref{4.1}). $L_Y^2(-p)=K_Y^2(p-2D_Y)=M^2_Y(-(l+1)p)$, $L_Y$ satisfies (\ref{4.2}). 

If $l$ odd, $L_X^2=K_X^2(2p-2D_X)=M_X^2(-(l+1)p)$, $L_X$ satisfies (\ref{4.1}). $L_Y^2(-p)=K^2_Y(p-D_Y)=M^2_Y(-(l+2)p)$, thus $L_Y$ satisfies (\ref{4.2}) by lemma \ref{lemma5.3}.
\vspace{.3cm}

Case (2). Let $M_X=L_X((\lceil\frac{l}{2}\rceil+1)p)=K_X((\lceil\frac{l}{2}\rceil+2)p-D_X)$, $M_Y=L_Y((\lfloor\frac{l}{2}\rfloor+1)p)=K_Y((\lfloor\frac{l}{2}\rfloor+2)p-D_Y)$.

If $l$ even, $L_Y^2=K^2_Y(2p-2D_Y)=M_Y^2(-(l+2)p)$, thus $L_Y$ satisfies (\ref{4.1}). $L_X^2(-p)=K_X^2(p-2D_X)=M_X^2(-(l+3)p)=M_X^2(-(g_X+2)p)$, $L_X$ satisfies (\ref{4.2}).

If $l$ odd, $L_X^2=K^2_X(2p-2D_X)=M_X^2(-(l+3)p)=M_X^2(-(g_X+2)p)$, $L_X$ satisfies (\ref{4.1}).
$L_Y^2(-p)=K_Y^2(p-2D_Y)=M_Y^2(-(l+2)p))$, $L_Y$ satisfies (\ref{4.2}).

Thus in either case, the hypotheses of lemma \ref{tech} are satisfied, and therefore $\kappa$ in (\ref{kappa}) is surjective.

\end{proof}
 
Combining proposition \ref{hard}, \ref{harder} and theorem \ref{new}, 
$$\xymatrix{\Sym^2H^0(C_t,L_t)\ar[r]^-{\mu^2(t)}&H^0(C_t,L_t^2)}$$
is surjective for small $t\ne0$.
Since there is a unique component $P$ of $\mathcal{W}^r_d$ which dominate $\mathcal{M}_g$ ($\rho>0$ case follows from the Gieseker-Petri theorem and the connectedness of $W^r_d(C)$; $\rho=0$ case follows from \cite{EH3}), to prove theorem \ref{main thm}, it suffices to arrange so that $(C_t,L_t)\in P$. But this is immediate because $C_0$ is a general point of the boundary of $\overline{\mathcal{M}_g}$.

\section{ some facts about canonical bundles on general curves}

 We present some technical facts about canonical bundles for a general curve in this section. They are needed in the proof of the main theorem. Lemma \ref{lemma5.1} to \ref{lemma5.3} are used in the previous section to show that maps of the type of (\ref{4.1}) and (\ref{4.2}) are surjective for the specific $L_0$ we choose in section $4$.
  
\begin{lemma} \label{lemma5.1}
 For a general smooth curve $X$ of genus $l\ge4$, and $p\in X$ a general point (in particular,not a Weierstrass point), the natural map
$$\xymatrix{\sum_{i+j=l-1,i,j\ge0}H^0(K_X(-ip))\otimes H^0(K_X(-jp))\ar[r]^-{m_{l-1}}& H^0(K_X^2(-(l-1)p))}$$
is surjective and
$$\xymatrix{\sum_{i+j=l,i,j\ge1}H^0(K_X(-ip))\otimes H^0(K_X(-jp))\ar[r]^-{m_l}& H^0(K_X^2(-lp))}$$ is of corank at most 1.

\end{lemma}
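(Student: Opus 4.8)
The plan is to fix a basis of $H^0(K_X)$ adapted to the order of vanishing at $p$ and to recast both statements as assertions about the restriction of the full canonical multiplication map to a coordinate subspace. Since $p$ is not a Weierstrass point, the vanishing orders of differentials at $p$ are exactly $0,1,\dots,l-1$, so I can choose $\omega_0,\dots,\omega_{l-1}\in H^0(K_X)$ with $\mathrm{ord}_p\omega_i=i$, whence $H^0(K_X(-ip))=\langle\omega_i,\dots,\omega_{l-1}\rangle$. Writing $\mu\colon\Sym^2H^0(K_X)\to H^0(K_X^2)$ for the total multiplication map and $F^s=\langle\omega_i\omega_j:\ i+j\ge s\rangle\subset\Sym^2H^0(K_X)$, a direct check shows that the image of $m_s$ is exactly $\mu(F^s)$ and that it lands in $W_s:=H^0(K_X^2(-sp))$. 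As $X$ is general of genus $\ge 4$ it is non-hyperelliptic, so Max Noether's theorem gives that $\mu$ is surjective; the two claims then read $\mu(F^{l-1})=W_{l-1}$ and $\mathrm{codim}_{W_l}\mu(F^l)\le 1$, with $\dim W_{l-1}=2l-2$ and $\dim W_l=2l-3$.

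First I would filter the target by order of vanishing at $p$. For general $p$ the bundle $K_X^2$ has the generic vanishing sequence $0,1,\dots,3l-4$ at $p$, so each graded piece $W_k/W_{k+1}$ is one--dimensional precisely for $l-1\le k\le 3l-4$. Since a single monomial $\omega_i\omega_j$ has order exactly $i+j$ at $p$, for every $k$ in the range $l-1\le k\le 2l-2$ the image $\mu(F^{l-1})$ already meets $W_k\setminus W_{k+1}$; by the triangular (leading--term) structure this shows $\mu(F^{l-1})+W_{2l-1}=W_{l-1}$, i.e. $\mu(F^{l-1})$ surjects onto $W_{l-1}/W_{2l-1}$. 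Consequently the surjectivity of $m_{l-1}$ is equivalent to the single inclusion $W_{2l-1}=H^0(K_X^2(-(2l-1)p))\subset\mu(F^{l-1})$, a space of dimension only $l-2$.

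The hard part will be exactly this last inclusion: no monomial reaches order $2l-1$ or beyond, so the sections of extremal order must be produced by cancellation among the leading terms of several products. Equivalently, I must show that the intersection $F^{l-1}\cap\Ker\mu$ is no larger than the value forced by the target, namely $\dim(F^{l-1}\cap\Ker\mu)=\dim F^{l-1}-(2l-2)$, where $\Ker\mu=I_2(X)$ is the $\binom{l-2}{2}$--dimensional space of quadrics through the canonical curve. To control this intersection I would either feed in Petri's explicit generators of $I_2(X)$ together with their orders of vanishing at $p$, or---more in the spirit of the present paper---degenerate $(X,p)$ to a rational curve with $l$ nodes (or a convenient stable limit), on which $H^0(K_X)$ has a combinatorial basis and the products $\omega_i\omega_j$ and their cancellations can be computed by hand; since surjectivity is an open condition, semicontinuity of the rank then transports the conclusion back to the general smooth pointed curve. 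This cancellation analysis is where the genericity of $X$ and $p$ is genuinely used.

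Finally, for the corank statement I would rerun the identical argument with $s=l$. The monomials again cover the orders $l\le k\le 2l-2$, so $m_l$ surjects onto $W_l/W_{2l-1}$ and its surjectivity reduces to $W_{2l-1}\subset\mu(F^l)$. Passing from $F^{l-1}$ to $F^l$ only discards the $\lfloor(l-1)/2\rfloor+1$ monomials of order exactly $l-1$, while the target drops the single piece $W_{l-1}/W_l$; the dimension count is therefore tight, and the same specialization shows that at most one dimension of $W_{2l-1}$ can fail to lie in $\mu(F^l)$. Isolating that unique possibly--missed graded piece and bounding its contribution by the genericity of $p$ yields the asserted corank $\le 1$.
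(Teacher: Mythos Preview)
Your reduction is correct up to the point where you isolate the inclusion $W_{2l-1}\subset\mu(F^{l-1})$ as the crux, but from there on the argument is only a sketch: you name two possible strategies (Petri's explicit quadrics, or degeneration to an $l$-nodal rational curve) and do not carry out either. Both would require real work---matching Petri's basis to the vanishing-order basis at $p$, or setting up the dualizing sheaf and the point $p$ on a nodal rational curve and checking surjectivity by hand---and neither is routine. As written, the proof stops exactly at the step you yourself flag as ``the hard part''. The corank-$1$ statement inherits the same gap, since your argument for it is the same reduction run with $s=l$.

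The paper bypasses this difficulty entirely with a much shorter device: the base-point-free pencil trick applied to the pencil $H^0(K_X(-(l-2)p))$. For general $(X,p)$ this pencil is base-point-free, so
\[
H^0(K_X)\otimes H^0(K_X(-(l-2)p))\longrightarrow H^0(K_X^2(-(l-2)p))
\]
has kernel $H^0(\mathcal O_X((l-2)p))$, which is one-dimensional; a dimension count then shows the map is surjective. Hence the $2l-1$ monomials $\omega_{l-1}\omega_j$ ($0\le j\le l-1$) and $\omega_{l-2}\omega_j$ ($0\le j\le l-2$) form a \emph{basis} of $H^0(K_X^2(-(l-2)p))$. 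Now one just counts: all of these except $\omega_{l-2}\omega_0$ lie in $\mu(F^{l-1})$, so the image already contains $2l-2$ linearly independent elements of the $(2l-2)$-dimensional space $H^0(K_X^2(-(l-1)p))$, proving surjectivity of $m_{l-1}$. For $m_l$ one excludes in addition $\omega_{l-1}\omega_0$ and $\omega_{l-2}\omega_1$, leaving $2l-4$ independent elements in a $(2l-3)$-dimensional target, hence corank $\le 1$. No cancellation analysis, no degeneration, and no appeal to Noether or Petri is needed.
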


\begin{proof} Choose $\{\omega_0,...,\omega_{l-1}\}$ a basis of $H^0(K_X)$ adapted to the flag $H^0(K_X)\supsetneq H^0(K_X(-p))\supsetneq...\supsetneq H^0(K_X(-(l-1)p)$, i.e. $H^0(K_X(-ip))=span\{\omega_i,...,\omega_{l-1}\}$ for any $i$. By generality of $X$ and $p$, we can assume $K_X(-(l-2)p)$ is a base point free pencil. By base point free pencil trick, the kernel of the map
\begin{eqnarray}\label{m'}
\xymatrix{H^0(K_X)\otimes H^0(K_X(-(l-2)p)\ar[r]^-{m'}&H^0(K_X^2(-(l-2)p))}
\end{eqnarray}
is $H^0(K_X\otimes K_X^{-1}((l-2)p)=H^0(\mathcal{O}_X(l-2)p)$. By Riemann-Roch, $h^0(\mathcal{O}_X(l-2)p)=h^0(K_X(-(l-2)p))+l-2-l+1=1$. Thus, by dimension count, $m'$ is surjective with a one dimension kernel generated by $\omega_{l-1}\otimes\omega_{l-2}-\omega_{l-2}\otimes\omega_{l-1}$. We obtain a basis of $H^0(K_X^2(-(l-2)p)$:
\begin{eqnarray}\label{basis}
\xymatrix{\omega_{l-1}^2,\ \omega_{l-1}\omega_{l-2},\ ...,\ \omega_{l-1}\omega_1,\ \omega_{l-1}\omega_0,\\
\omega_{l-2}^2,\ \omega_{l-2}\omega_{l-3}\ ,...,\ \omega_{l-2}\omega_1,\ \omega_{l-2}\omega_0.}
\end{eqnarray}
Except $\omega_{l-2}\omega_0$, every other element of the above basis lies in the image of $m_{l-1}$, thus $m_{l-1}$ is surjective. Similarly, except $\omega_{l-2}\omega_0$, $\omega_{l-1}\omega_0$, $\omega_{l-2}\omega_1$, every other element of the above basis lies in the image of $m_l$. Therefore, $m_l$ is of corank at most $1$. 

\end{proof}

\begin{lemma}\label{lemma5.2}
For a general curve $X$ of genus $l\ge6$, and $p\in X$ a general point, $m_l$ in lemma \ref{lemma5.1} is surjective.
\end{lemma}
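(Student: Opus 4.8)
The plan is to bootstrap from lemma \ref{lemma5.1}, whose base-point-free pencil trick for the pencil $K_X(-(l-2)p)$ already shows that $m_l$ has corank at most one, and then to kill the last dimension. First I would pin down the single potential cokernel class. Keeping the adapted basis $\{\omega_0,\dots,\omega_{l-1}\}$ and the induced basis (\ref{basis}) of $H^0(K_X^2(-(l-2)p))$, every basis vector of vanishing order $\ge l$ already lies in $\Image(m_l)$, and these span a subspace $U\subset H^0(K_X^2(-lp))$ of codimension one. Since $p$ is not a Weierstrass point, the two products $\omega_{l-1}\omega_0$ and $\omega_{l-2}\omega_1$ have vanishing order exactly $l-1$, so a unique combination $\eta=\omega_{l-1}\omega_0+c\,\omega_{l-2}\omega_1$ (with $c\neq0$) acquires order $\ge l$ and completes $U$ to a basis of $H^0(K_X^2(-lp))$. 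Thus it suffices to prove $\eta\in\Image(m_l)$, i.e. that $\eta$ is reached by admissible products $\omega_a\omega_b$ with $a,b\ge1$ and $a+b\ge l$.

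Next I would clear $\eta$ order by order. Its leading term in the order filtration agrees (up to scalar) with that of the admissible product $\omega_{l-1}\omega_1$, so $\eta-c_1\,\omega_{l-1}\omega_1\in H^0(K_X^2(-(l+1)p))$; matching leading terms against $\omega_{l-1}\omega_2,\omega_{l-1}\omega_3,\dots$ reduces the obstruction into successively higher order. At every order $m$ with $l\le m\le 2l-2$ there is an admissible product of exactly that order spanning the $1$-dimensional graded piece, so the reduction proceeds until the obstruction is pushed into the top of the filtration, $H^0(K_X^2(-(2l-1)p))$, whose sections have order $\ge 2l-1$ and hence are attained by no single product but only by cancellation. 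To manufacture these I would invoke that for general $(X,p)$ the canonical ring is generated in degree one (Max Noether), and track a Petri-type quadric relation among the $\omega_i$ that rewrites the boundary monomial $\omega_{l-1}\omega_0$, modulo $U$, in terms of interior monomials $\omega_a\omega_b$ with $2\le a,b$. Such an interior relation needs two interior indices summing to $l$ with each at least $2$, which is exactly where the hypothesis $l\ge6$ is used.

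Concretely I expect to realize this by a second base-point-free pencil trick: multiplying $H^0(K_X(-2p))$ against the pencil $K_X(-(l-2)p)$ produces admissible products whose syzygy is governed by the kernel $H^0(\mathcal{O}_X((l-4)p))$, which is one-dimensional for a general point, and this syzygy links interior monomials to $\eta$. The main obstacle is the final step: verifying that this syzygy has nonzero $\eta$-component, equivalently that the relevant interior twists of $K_X(-(l-2)p)$ are base-point-free with exactly their expected cohomology, so that no unexpected section inflates the pencil-trick kernel and annihilates the $\eta$-direction. I would settle this from the genericity of $X$ and of $p$, which for $l\ge6$ guarantees the expected dimensions. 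Dually, this is the assertion that no nonzero functional in $H^1(\mathcal{O}_X(lp-K_X))\cong H^0(K_X^2(-lp))^{\vee}$ kills all admissible products, a Gieseker--Petri-flavoured non-vanishing that I expect to hold precisely in this range.
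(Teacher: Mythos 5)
Your reduction to a single missing direction is sound and matches the paper's starting point: the admissible products $\omega_{l-1}\omega_j$ ($j\ge 1$) and $\omega_{l-2}\omega_j$ ($j\ge 2$) span a codimension-one subspace $U\subset H^0(K_X^2(-lp))$, and surjectivity of $m_l$ amounts to exhibiting one admissible product with nonzero class modulo $U$. The gap is in the mechanism you propose for producing it. The second base-point-free pencil trick, applied to $H^0(K_X(-2p))\otimes H^0(K_X(-(l-2)p))$, has image spanned exactly by the products $\omega_i\omega_j$ with $i\ge 2$ and $j\in\{l-2,l-1\}$, all of which already lie in $U$; its one-dimensional kernel $H^0(\mathcal{O}_X((l-4)p))$ corresponds only to the trivial commutator syzygy $\omega_{l-1}\otimes\omega_{l-2}-\omega_{l-2}\otimes\omega_{l-1}$, so it links nothing to the missing direction. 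Likewise, the order-by-order reduction of $\eta$ against elements of $U$ merely recomputes the (nonzero) class of $\eta$ in $H^0(K_X^2(-lp))/U$, and the appeals to Max Noether and to a ``Gieseker--Petri-flavoured non-vanishing'' restate what is to be proved. What is actually needed is a reason why some genuinely interior product, such as $\omega_{l-3}^2$ (admissible precisely when $l\ge6$, which is the true source of that hypothesis), has nonzero $\eta$-component; at a general $(X,p)$ nothing you cite detects this.

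The paper gets around this by specialization rather than by a syzygy at the general point. Since surjectivity is open, it suffices to find one $(X,p)$ where $m_l$ surjects; one chooses $(X,p)$, with $X$ still general in $\mathcal{M}_l$, so that the pencil $K_X(-(l-2)p)$ acquires a base point $q\ne p$ that is not a base point of $K_X(-(l-3)p)$ (existence via theorems \ref{movex} and \ref{movnonex}). Then every element of the span $V$ of the boundary products vanishes at $q$, while $\omega_{l-3}^2$ does not, so $\omega_{l-3}^2\notin V$. The remaining work, which has no analogue in your outline, is to verify that $V$ still has codimension one at this special point even though the kernel of $m'$ in (\ref{m'}) jumps to dimension two; this is done by writing the extra kernel element explicitly in terms of a rational function $\tau$ with pole of order $l-2$ at $p$ and checking that the resulting relation among the generators in (\ref{basis}) has nonzero $\omega_{l-1}\omega_0$-coefficient. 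To salvage your approach at the general point you would need an explicit identity expressing $\omega_{l-1}\omega_0$ modulo $U$ in terms of interior products, and none of the tools you invoke produces one.
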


\begin{proof}
It suffices to find some $(X,p)$ for which $m_l$ is surjective. Take a special $(X,p)$ such that $K_X(-(l-2)p)$ is a pencil with a base point $q\ne p$ and that $q$ is not a base point of $K_X(-(l-3)p)$. This is equivalent to find a $X$ and $p$, $q\in X$ such that $h^0(\mathcal{O}_X((l-2)p+q))=2$ and $h^0(\mathcal{O}_X((l-3)p+q)=h^0(\mathcal{O}_X((l-2)p))=1$. One can actually choose $X$  to be a general point of $\mathcal{M}_l$ (but $(X,p)$ is not general in $\mathcal{M}_{l,1}$). This is because by theorem \ref{movex}, there exists a $g^1_{l-1}$ on a general curves $X$ with vanishing sequence $(0,l-2)$ at some point $p\in X$, and by theorem \ref{movnonex}, such a $g^1_{l-1}$ is base point free, complete and its residual series has a unique base point $q$.  For such $(X,p)$, any element in 
$$\xymatrix{V=\Im(\sum_{i=1}^2H^0(K_X(-ip))\otimes H^0(K_X(-(l-i)p))\ar[r]^-{m_l}&H^0(K_X^2(-lp)))}$$
vanishes at $q$. Now, take $\omega_{l-3}\in H^0(K_X(-(l-3)p))$ which does not vanish at $q$, then $\omega^2_{l-3}$ is in the image of $m_l$ but does not lie in $V$. It remains to show for this special $(X,p)$, $V$ is still of codimension  $1$ in $H^0(K_X^2(-lp))$. Since $K_X(-(l-2)p)$ has a unique base point $q$, by base point free pencil trick, the kernel of $m'$ in (\ref{m'}) is isomorphic to $H^0(\mathcal{O}_X((l-2)p+q)))$, which is $2$ dimensional. By dimension count, $m'$ is corank $1$, and there is exactly $1$ linear relation among the generators in (\ref{basis}). Let $\tau\in H^0(\mathcal{O}_X((l-2)p+q))$ be a section viewed as a rational function having a pole of order exactly $l-2$ at $p$ and a pole of order $1$ at $q$, then the kernel of $m'$ is spanned by 
$$\tau\omega_{l-1}\otimes\omega_{l-2}-\tau\omega_{l-2}\otimes\omega_{l-1}$$ 
and 
$$\omega_{l-1}\otimes\omega_{l-2}-\omega_{l-2}\otimes\omega_{l-1}.$$ 
Where $\{\omega_{l-2}, \omega_{l-1}\}$ span $H^0(K_X(-(l-2)p))$. Since a general curve only has normal Weierstrass points, $h^0(K_X(-(l-1)p)=1$. We can assume $\omega_{l-2}$ vanishes to order exactly $l-2$ at $p$. Thus $\tau\omega_{l-2}\in H^0(K_X)$ does not vanish at $p$, and the linear relation between the generators in (\ref{basis}) will have non-zero coefficient in $\omega_{l-1}\omega_0$. Thus 
 $$\xymatrix{\omega_{l-1}^2,\ \omega_{l-1}\omega_{l-2},\ ...,\ \omega_{l-1}\omega_1,\\
\omega_{l-2}^2,\ \omega_{l-2}\omega_{l-3}\ ,...,\ \omega_{l-2}\omega_2.}$$
are still linearly independent, and therefore $V$ is still of codimension $1$ in $H^0(K_X^2(-lp))$.

 \end{proof}
 
 \begin{lemma}\label{lemma5.3}
 Let $X$ be a general curve of genus $l\ge6$, $D$ is a divisor of degree $0<d<l-2$ consisting of $d$ distinct general points, $p\in X$ is a general point. Let $M=K_X(-D+(d+2)p)$. Then the multiplication maps
 \begin{eqnarray}\label{easy}
 \xymatrix{\sum_{i+j=l+1,i,j\ge0}H^0(M(-ip))\otimes H^0(M(-jp))\ar[r]&H^0(M^2(-(l+1)p))}
 \end{eqnarray} 
 and
  \begin{eqnarray}\label{diff}
 \xymatrix{\sum_{i+j=l+2,i,j\ge1}H^0(M(-ip))\otimes H^0(M(-jp))\ar[r]&H^0(M^2(-(l+2)p))}
 \end{eqnarray}  
 are both surjective.
 \end{lemma}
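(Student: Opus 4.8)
The plan is to adapt the base-point-free pencil trick argument of Lemmas \ref{lemma5.1} and \ref{lemma5.2} from the canonical bundle to the twisted bundle $M=K_X(-D+(d+2)p)$. First I would record the invariants. Since $\deg M=(2l-2)-d+(d+2)=2l>2g-2$, the bundle $M$ is non-special with $h^0(M)=l+1$. A Riemann--Roch computation on the twists $M(-ip)=K_X(-D+(d+2-i)p)$, using $h^0(D-mp)=0$ for $m\ge1$, $h^0(D)=1$, $h^0(K_X(-D))=h^1(D)=l-d$, and the fact that a general $p$ imposes one independent condition at each step, shows that $h^0(M(-ip))=l+1-i$ for $0\le i\le d+1$, that it stays equal to $l-d$ from $i=d+1$ to $i=d+2$, and that it equals $l+2-i$ for $d+2\le i\le l+2$. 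Hence the vanishing sequence of $M$ at $p$ is
\[
\{0,1,\dots,d\}\cup\{d+2,d+3,\dots,l+1\},
\]
a full sequence with a single gap at $d+1$. In particular $P:=M(-lp)$ is a pencil of degree $l$ ($h^0=2$), base-point-free for general $(X,D,p)$.

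The engine is the base-point-free pencil trick applied to $P$ and $F=M$. Its kernel is $H^0(M\otimes P^{-1})=H^0(\mathcal{O}(lp))$, which is one-dimensional because $l$ is a Weierstrass gap at the general point $p$. Since $\dim\bigl(H^0(P)\otimes H^0(M)\bigr)=2(l+1)$ and $h^0(M^2(-lp))=3l-l+1=2l+1$, the multiplication map $H^0(M(-lp))\otimes H^0(M)\to H^0(M^2(-lp))$ is surjective. Writing $\sigma_a$ for a section of $M$ of exact vanishing order $a$ at $p$ (so that $H^0(M(-lp))=\langle\sigma_l,\sigma_{l+1}\rangle$), this produces an explicit basis of $H^0(M^2(-lp))$ consisting of the $2l+1$ distinct products $\sigma_l\sigma_a$ and $\sigma_{l+1}\sigma_a$, with $a$ ranging over the vanishing sequence above.

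For the map (\ref{easy}) I would then argue directly. The subspace $H^0(M^2(-(l+1)p))$ has codimension one in $H^0(M^2(-lp))$, and the unique basis product of order exactly $l$ is $\sigma_l\sigma_0$; the remaining $2l$ basis products all have order $\ge l+1$, hence form a basis of $H^0(M^2(-(l+1)p))$. Each such product $\sigma_a\sigma_b$ satisfies $a+b\ge l+1$, so it lies in the image of (\ref{easy}) (choose $i\le a$, $j\le b$ with $i+j=l+1$, which is possible exactly when $a+b\ge l+1$). Therefore (\ref{easy}) is surjective, subject only to the two genericity facts above (the shape of the vanishing sequence and base-point-freeness of $P$), both of which hold for general $(X,D,p)$.

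The map (\ref{diff}) is the hard case, exactly parallel to $m_l$ in Lemma \ref{lemma5.1}. Repeating the bookkeeping, among the $2l$ basis products of $H^0(M^2(-(l+1)p))$ there are now \emph{two} of order exactly $l+1$, namely $\sigma_l\sigma_1$ and $\sigma_{l+1}\sigma_0$; the $2l-2$ basis products of order $\ge l+2$ all satisfy $a,b\ge1$ and $a+b\ge l+2$, hence lie in the image of (\ref{diff}), giving corank at most one. The one missing dimension is the order-raising combination $\sigma_l\sigma_1-c\,\sigma_{l+1}\sigma_0\in H^0(M^2(-(l+2)p))$, and neither $\sigma_l\sigma_1$ (order $l+1$) nor $\sigma_{l+1}\sigma_0$ ($b=0$) is individually admissible for (\ref{diff}). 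The hard part will be to show this section is nevertheless hit, using the \emph{interior} products $\sigma_a\sigma_b$ with $a,b\notin\{l,l+1\}$ and $a+b\ge l+2$, which are admissible for (\ref{diff}) but do not belong to the pencil-trick basis. Following Lemma \ref{lemma5.2}, I would specialize $(X,D,p)$ to a configuration for which $P=M(-lp)$ acquires a base point $q\ne p$ (so $M(-lp-q)$ is again a pencil while $q$ is not basepoint of the residual series), compute via the base-point-free pencil trick the single relation this forces among the generators, and exhibit one admissible interior product — typically a square $\sigma_a^2$ with $a$ chosen so that $\sigma_a$ does not vanish at $q$ — whose class is independent of the $2l-2$ clean products. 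Surjectivity for general $(X,D,p)$ then follows by lower semicontinuity of the rank. Locating this good special configuration and controlling the single relation is the main obstacle; everything else is the same pencil-trick bookkeeping used to prove Lemmas \ref{lemma5.1} and \ref{lemma5.2}.
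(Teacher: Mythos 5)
Your reduction of (\ref{easy}) to surjectivity and of (\ref{diff}) to corank at most one is correct and is exactly the paper's argument: the same computation of the vanishing sequence $\{0,\dots,d\}\cup\{d+2,\dots,l+1\}$ of $M$ at $p$, the same base-point-free pencil trick applied to $H^0(M(-lp))\otimes H^0(M)$, and the same bookkeeping with the resulting basis of products. The problem is that you stop exactly where the content of the lemma begins: surjectivity of (\ref{diff}) is \emph{reduced to}, but not established by, the existence of a special $(X,D,p)$ for which $M(-lp)$ acquires a base point $q$ with the stated properties and for which the $2l-2$ ``clean'' products remain independent. You label this step ``the main obstacle'' and do not carry it out, so as written the proposal only proves that (\ref{diff}) has corank at most one.

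The paper fills the gap in two steps, both requiring input you have not supplied. First, the special configuration exists with $(X,p)$ still a general point of $\mathcal{M}_{l,1}$: by the existence half of Theorem \ref{fixex} (or by Osserman) there is a $g^1_{l-1}$ with ramification sequence $(0,l-d-2)$ at $p$, and a general such series is base point free and complete with residual series $M(-lp)$ having a unique base point $q$ which is not a base point of $M(-(l-1)p)$; this is what legitimizes taking $\omega_{l-1}^2$ (admissible for (\ref{diff}) since $l-1\ge 3$, and nonvanishing at $q$) as an element of the image outside the span $V$ of the clean products, all of which vanish at $q$. Second, one must verify that $V$ still has codimension one after specialization: the pencil-trick kernel jumps to the two-dimensional space $H^0(\mathcal{O}_X(lp+q))=\langle 1,\tau\rangle$, so there is now one genuine linear relation among the basis products; the paper shows this relation has nonzero coefficient on $\omega_{l+1}\omega_0$ (because $\tau\omega_l$ does not vanish at $p$, using $h^0(M(-(l+1)p))=1$ to normalize $\omega_l$), hence it does not involve the clean products, which therefore stay independent. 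Both steps are short once stated, but they are precisely the part of the proof your proposal defers, and without them the lemma is not proved.
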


\begin{proof}

The idea is similar to the previous two lemmata. We have $deg\ M=2l$, $h^0(M)=l+1$ and $M(-lp)=K_X(-D+(d+2-l)p)$ is a base point free pencil since $p$ is general. Consider
\begin{eqnarray}
\xymatrix{H^0(M(-lp))\otimes H^0(M)\ar[r]^-{m'}&H^0(M^2(-lp))}
\end{eqnarray}
By base point free pencil trick, $\Ker(m')$ is isomorphic to $H^0(\mathcal{O}_X(lp))$ and is $1$ dimensional since $p$ is not a Weierstrass point of $X$. Since $h^0(M^2(-lp))=2l+1$, by dimension count, $m'$ is surjective. Extend a basis
$\{\omega_l,\ \omega_{l+1}\}$ of $H^0(M(-lp))$ to a basis of $H^0(M)$:
\begin{eqnarray}
\{\ \omega_0,\ \omega_1,...,\omega_{d},\ \widehat{\omega_{d+1}},\ \omega_{d+2},...,\omega_l,\ \omega_{l+1}\ \}
\end{eqnarray}
with each $\omega_i$ vanish to order exactly $i$ at $p$. The gap $\widehat\omega_{d+1}$ occurs because $h^0(M(-(d+1)p)=h^0(K_X(-D+p))=h^0(M(-(d+2)p)$. By the same argument as lemma \ref{lemma5.1}, (\ref{easy}) is surjective and (\ref{diff}) is at most corank $1$.

To prove that (\ref{diff}) is actually surjective for general $(X,D,p)$, we specialize to some $(X,D,p)$ such that $M(-lp)$ is a pencil with a base point $q$ but $q$ is not a base point of $M(-(l-1)p)$. This is equivalent to $h^0(M(-lp-q))=h^0(M(-lp))=2$ or equivalently $h^0(\mathcal{O}_X(D+q+(l-d-2)p))=2$ and $h^0(\mathcal{O}_X(D+(l-d-2)p))=1$. We can even choose $(X,p)$ be a general point of $\mathcal{M}_{l,1}$. This is because by the existence half of theorem \ref{fixex} (or by \cite{O}), there exists a $g^1_{l-1}$ with ramification sequence $(0,l-d-2)$ at $p$, and by the second half of theorem \ref{fixex}, a general such $g^1_{l-1}$ is base point free, complete, and its residual series has a unique base point $q$. Then, as in lemma \ref{lemma5.2}, $\omega_{l-1}^2$ is in the
 image of (\ref{diff}) but not in 
  $$\xymatrix{V=\Im(\sum_{i=1}^2H^0(M(-ip))\otimes H^0(M(-(l+2-i)p))\ar[r]&H^0(M^2(-(l+2)p)))}$$
It remains to prove for this special $(X,D,p)$, $V$ is still of codimension $1$ in $H^0(M^2(-(l+2)p))$. The problem here is that due to the base point $q$ of $M(-lp)$, $\Ker(m')$ is isomorphic to $H^0(\mathcal{O}_X((lp+q))$, which is $2$ dimensional (because $p$ is not a Weierstrass point). Thus $m'$ is not surjective but corank $1$. However, if $H^0(\mathcal{O}_X(ip+q))$ is span by $\{1,\tau\}$ with $\tau$ a rational function having pole of order exactly $l$ at $p$, then $\Ker(m')$ is spanned by
 $$\tau\omega_{l}\otimes\omega_{l+1}-\tau\omega_{l+1}\otimes\omega_{l}$$
  and 
 $$\omega_{l}\otimes\omega_{l+1}-\omega_{l+1}\otimes\omega_{l}.$$
   Again by theorem \ref{fixex}, we can even assume $h^0(M(-(l+1)p))=1$, and therefore can assume $m_l$ vanishes to order exactly $l$ at $p$. Thus $\tau\omega_l$ does not vanish at $p$ and by the same argument as lemma \ref{lemma5.2}, $V$ is still of codimension $1$ in $H^0(M^2(-(l+2)p))$.

\end{proof}

For the convenience of the reader, we state here some existence and non-existence results in Brill-Noether theory which we used in the proof of Lemma \ref{lemma5.2} and \ref{lemma5.3}. 

\begin{theorem}\label{fixex}(Brill-Noether theorem, fixed ramification point)
Let $X$ be a general curve of genus $g$, $p\in X$ a fixed general point. For any sequence $0\le m_0<...<m_r\le d$, let $\rho$ be the adjusted Brill-Noether number
$$\rho=g-\sum_{i=0}^r(m_i-i+g-d+r).$$
and let $\rho_+$ be the existence number
$$\rho_+=g-\sum_{m_i-i+g-d+r\ge0}(m_i-i+g-d+r)$$

If $\rho_+$ is nonnegative, then $X$ possesses $g^r_d$'s with vanishing sequence $(m_0,..,m_r)$ at $p$. Moreover, the variety $\mathcal{G}^r_d(m_0,...,m_r)$ parametrizing such $g^r_d$'s is empty if $\rho<0$ and has pure dimension $\rho$ if $\rho>0$. 
\end{theorem}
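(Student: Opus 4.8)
The statement is the pointed (ramification) version of the Brill--Noether theorem for a general curve carrying one general marked point, and I would establish it by degeneration within the Eisenbud--Harris theory of limit linear series \cite{EH1}. Throughout write $\alpha_i = m_i - i$ for the ramification sequence, a nondecreasing sequence of nonnegative integers, so that $\rho = \rho(g,r,d) - \sum_{i=0}^r \alpha_i$ with $\rho(g,r,d) = g-(r+1)(g-d+r)$ the ordinary Brill--Noether number, while $\rho_+$ is obtained by discarding those terms $\alpha_i + (g-d+r)$ that happen to be negative (which can occur only in the non-special range $g-d+r<0$, so that $\rho_+\ge\rho$ always). The plan has three ingredients: a universal dimension lower bound valid on every curve, a matching upper bound together with the emptiness assertion obtained by specializing $(X,p)$, and a separate construction for the existence clause governed by $\rho_+$.

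First I would record the lower bound. On an arbitrary smooth curve the scheme $G^r_d$ of linear series is cut out locally by a determinantal condition, so each of its components has dimension at least $\rho(g,r,d)$; imposing vanishing sequence $\ge (m_i)$ at the fixed point $p$ is a Schubert condition on the complete flag of $H^0(L)$ filtered by order of vanishing at $p$, of expected codimension $\sum_i \alpha_i$. Hence every component of $\mathcal{G}^r_d(m_\bullet)$ has dimension at least $\rho$ on any curve, in particular on our general $(X,p)$. This gives one half of the pure-dimension statement for free and reduces the dimension count to an upper bound.

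For the upper bound and the emptiness when $\rho<0$, I would degenerate $(X,p)$ to a curve of compact type, concretely a chain $E_1\cup\cdots\cup E_g$ of general elliptic curves with $p$ placed on a terminal component (or carried by an extra rational bridge). A limit $g^r_d$ assigns to each component an aspect $l_i\in G^r_d(E_i)$, and at each node the vanishing sequences of the two adjacent aspects satisfy the Eisenbud--Harris compatibility $a_j(l_i)+a_{r-j}(l_{i+1})\ge d$. The key point is the (sub)additivity of the adjusted Brill--Noether number along the chain: the dimension of the space of limit series with ramification $\ge(m_i)$ at $p$ is bounded by the sum of the local contributions, and an explicit computation on a single elliptic (or rational) component shows this sum is at most $\rho$, and is negative, forcing emptiness, exactly when $\rho<0$. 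Since limit linear series form a proper scheme over the base of the degeneration and dimension is upper semicontinuous, the bound transports to the general fiber; combined with the lower bound above and the Eisenbud--Harris smoothing theorem (a dimensionally proper limit series is smoothable), this yields nonemptiness with the correct dimension $\rho$ when $\rho>0$ and emptiness when $\rho<0$.

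The genuinely delicate clause is existence under the weaker hypothesis $\rho_+\ge 0$, and this is where I expect the main difficulty. Here $\rho$ may be negative while series with ramification $\ge (m_i)$ still exist, necessarily with strictly larger vanishing somewhere, the surplus being exactly the magnitude of the discarded negative terms $\alpha_i+(g-d+r)<0$. To produce such a series I would again build a limit linear series on the chain, now choosing the aspects so as to absorb the dropped terms into forced ramification at the nodes, using the explicit classification of $g^r_d$'s with prescribed ramification on an elliptic curve as the building block, and then invoke smoothability. Verifying that the $\rho_+$ bookkeeping leaves enough room at every stage of the inductive construction, equivalently that the nodal compatibility inequalities can be met simultaneously with the prescribed ramification at $p$, is the crux; this is precisely the content of the Eisenbud--Harris existence results, and an alternative self-contained argument is available in Osserman \cite{O}, which I would cite to complete this half.
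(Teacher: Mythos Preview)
The paper does not actually prove this theorem: its entire proof consists of two citations, pointing to Lehman's thesis \cite{L}, theorem~3.2--1, for the existence half governed by $\rho_+$, and to Harris--Morrison \cite{HM}, theorem~5.37, for the emptiness and pure-dimension statements. The result is treated as background Brill--Noether theory quoted for later use in Lemmas~\ref{lemma5.2} and~\ref{lemma5.3}, not as something the paper establishes.

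Your proposal, by contrast, sketches an actual argument: the determinantal/Schubert lower bound, degeneration of $(X,p)$ to a flag curve of elliptic components, the additivity of adjusted Brill--Noether numbers along the chain for the upper bound, and a limit-series construction plus smoothing for existence. This is precisely the standard limit-linear-series approach that underlies the references the paper cites (Harris--Morrison's theorem~5.37 is exactly the flag-curve argument you describe), so your outline is correct and in the same spirit as the literature the paper invokes; you are simply doing more work than the paper itself does. One minor discrepancy: for the $\rho_+\ge 0$ existence clause you point to Osserman \cite{O}, whereas the paper attributes this to Lehman \cite{L}; both are valid references, but if you are matching the paper you should cite \cite{L}. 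Also be aware that your remark ``$\rho$ may be negative while series with ramification $\ge(m_i)$ still exist'' sits in some tension with the emptiness clause as literally stated; the reconciliation is that $\rho_+>\rho$ only when $g-d+r<0$, in which non-special range the naive Schubert count overdetermines and the two clauses are not simultaneously active for the same $(m_\bullet)$.
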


\begin{proof}
For the existence half, see \cite{L} theorem $3.2$-$1$. For the second half, see \cite{HM} theorem $5.37$.
\end{proof}

\begin{theorem}\label{movnonex}(Non-existence for low $\rho_{mov}$)

Let $X$ be a general curve of genus $g$, $\rho_{mov}$ be the moving-point Brill-Noether number
$$ \rho_{mov}=1+g-(r+1)(g-d+r)-\sum_{i=0}^r(m_i-i).$$
If $\rho_{mov}<1-r$, then for any $p\in X$, there is no $g^r_d$ on $X$ with vanishing sequence $(m_0,...,m_r)$ at $p$.

\end{theorem}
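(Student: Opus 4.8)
The plan is to realize the statement as the non-dominance of a universal pointed Brill--Noether locus over moduli, and to reduce the ``for any $p$'' clause to a dimension estimate. Let $\mathcal{M}_{g,1}$ be the moduli of $1$-pointed genus $g$ curves and let $\mathcal{G}\to\mathcal{M}_{g,1}$ be the variety whose points are triples $(X,p,V)$ with $V$ a $g^r_d$ on $X$ whose vanishing sequence at $p$ is termwise $\ge(m_0,\dots,m_r)$. Writing $\rho=g-\sum_i(m_i-i+g-d+r)$ for the adjusted Brill--Noether number of Theorem \ref{fixex}, one has $\rho_{mov}=\rho+1$; a standard determinantal computation gives expected dimension $3g-3+\rho_{mov}$, and the usual pointed lower bound says every component of $\mathcal{G}$ has dimension $\ge 3g-3+\rho_{mov}$. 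The theorem is equivalent to the assertion that the further projection $\mathcal{G}\to\mathcal{M}_g$ (forgetting $p$) is \emph{not} dominant, its image being exactly the set of curves carrying a bad linear series at some point.

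First I would dispose of the generic point. Since $r\ge1$, the hypothesis forces $\rho=\rho_{mov}-1<-r\le-1<0$, so Theorem \ref{fixex} makes the fibre of $\mathcal{G}\to\mathcal{M}_{g,1}$ over a general pointed curve empty; hence no component of $\mathcal{G}$ dominates $\mathcal{M}_{g,1}$, and the image $Z\subset\mathcal{M}_{g,1}$ is a proper closed subvariety. Consequently, were $\mathcal{G}\to\mathcal{M}_g$ dominant, a general $X$ would carry only \emph{finitely} many bad points $p$, and the whole problem reduces to bounding the dimension of those components $\Sigma\subset\mathcal{G}$ that dominate $\mathcal{M}_g$.

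The crux is the sharp upper bound $\dim\Sigma\le 3g-4+\rho_{mov}+r$ for any $\mathcal{M}_g$-dominating $\Sigma$, i.e.\ that the excess over the expected dimension is at most $r-1$. Granting it, a dominating $\Sigma$ satisfies $\dim\Sigma\ge 3g-3$, whence $\rho_{mov}+r\ge1$, i.e.\ $\rho_{mov}\ge1-r$, contradicting the hypothesis; so no such $\Sigma$ exists and $\mathcal{G}\to\mathcal{M}_g$ is not dominant. To prove the bound I would pass to the fibre $\mathcal{G}_X=\{(p,V)\}$ over a general $X$ and compose with the forgetful map $q\colon\mathcal{G}_X\to G^r_d(X)$, $(p,V)\mapsto V$. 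Because a fixed $g^r_d$ carrying nontrivial ramification has only finitely many points of ramification $\ge(m_i)$ (total ramification being finite by the Pl\"ucker formula; the trivial case $\sum_i(m_i-i)=0$ forces $G^r_d(X)$ empty under our hypothesis), the map $q$ has finite fibres, so $\dim\Sigma_X=\dim q(\Sigma_X)$ and it remains to bound the ramification degeneracy locus $q(\Sigma_X)\subset G^r_d(X)$. Reading the condition as the incidence of the fixed subspace $V$ with the $1$-parameter family of osculating flags $\{p\mapsto\text{flag at }p\}$, this is a Schubert-type locus of expected codimension $\sum_i(m_i-i)$, and the point is that its failure of transversality, hence its excess, is governed by the rank $r$ — which is exactly the mechanism producing the threshold $1-r$.

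The main obstacle is precisely this excess estimate, which is not formal: for $r\ge2$ the universal pointed locus can genuinely carry excess up to $r-1$, so one cannot merely invoke ``expected dimension.'' I would make it rigorous by the Eisenbud--Harris method of limit linear series (cf.\ \cite{EH1}): degenerate $(X,p)$ to a flag curve, with the marked point on a tail, and count the admissible limit vanishing sequences compatible with ramification $\ge(m_i)$ at the limit of $p$; the degrees of freedom lost in matching sequences across the nodes are what cap the excess at $r-1$, and semicontinuity transfers the bound back to the general smooth curve. For $r=1$ — the only case used in Lemmas \ref{lemma5.2} and \ref{lemma5.3} — the excess is $0$ and the argument collapses to the statement that the universal pointed pencil locus has its expected dimension, so the full degeneration analysis is needed in earnest only for higher $r$.
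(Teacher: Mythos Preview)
The paper does not prove this theorem at all: its entire proof is the sentence ``See \cite{L} theorem $4.3$-$6$,'' a citation to Lehman's thesis. So there is no in-paper argument to compare your proposal against; the relevant question is whether your outline is a correct substitute for, or reconstruction of, the cited result.

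Your global architecture is sound. Setting up the universal pointed locus $\mathcal{G}\to\mathcal{M}_{g,1}\to\mathcal{M}_g$, observing $\rho_{mov}=\rho+1$, and reducing the theorem to the non-dominance of $\mathcal{G}\to\mathcal{M}_g$ is exactly the right framing, and your numerical reduction---if every $\mathcal{M}_g$-dominating component $\Sigma$ satisfies $\dim\Sigma\le 3g-4+\rho_{mov}+r$, then dominance forces $\rho_{mov}\ge 1-r$---is clean and correct. The finiteness of the forgetful map $q:\mathcal{G}_X\to G^r_d(X)$ via the Pl\"ucker formula is also fine (and your handling of the trivial-ramification edge case is correct).

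The gap, which you yourself flag, is that the crucial upper bound $\dim\Sigma\le 3g-4+\rho_{mov}+r$ is asserted, not proved. The finite-fibres argument alone only yields $\dim\mathcal{G}_X\le\dim G^r_d(X)=g-(r+1)(g-d+r)$, which is weaker than what you need once $\sum_i(m_i-i)>r$. Your ``excess at most $r-1$'' heuristic is the right slogan, and a limit-linear-series degeneration to a flag curve is indeed the standard engine for such bounds (and is, in fact, what Lehman does), but the sentence ``the degrees of freedom lost in matching sequences across the nodes are what cap the excess at $r-1$'' is not yet an argument. To make it one, you would have to carry out the full additivity/compatibility count across the chain of elliptic components, showing that any limit of a $(p,V)$ in $\Sigma$ forces enough Schubert conditions on the aspects to bound the dimension of the space of limit series by $\rho_{mov}+r-1$; this is several pages of bookkeeping, not a remark.

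Your closing observation is well taken and worth emphasizing: the paper only ever invokes this theorem with $r=1$ (inside Lemma~\ref{lemma5.2}), and in that case the excess bound is zero, so the finite-fibres step together with the ordinary Brill--Noether dimension of $G^1_d(X)$ already suffices. So for the purposes of the present paper your sketch is effectively complete; it is only the general-$r$ statement that remains a genuine outline rather than a proof.
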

\begin{proof}
See \cite{L} theorem $4.3$-$6$.
\end{proof}

\begin{theorem}\label{movex}(Existence of $g^1_d$'s with movable ramification point)

Let $X$ be a general curve of genus $g$, fix $0<m_1\le d$, then there exists a $g^1_d$ on $X$ with vanishing sequence $(0,m_1)$ at some point $p\in X$ if $g-d+1\ge0$ and the moving-point Brill-Noether number $\rho_{mov}=1+g-2(g-d+1)-(m_1-1)\ge0$.
\end{theorem}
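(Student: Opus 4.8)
The plan is to prove Theorem \ref{movex}, the existence of a $g^1_d$ with prescribed vanishing sequence $(0,m_1)$ at a \emph{movable} point $p$, by reducing it to the fixed-ramification-point existence statement already recorded as the existence half of Theorem \ref{fixex}. First I would interpret the two numerical hypotheses. The condition $g-d+1\ge0$ is just $h^1\ge$ the naive expectation, i.e. it ensures the target Brill--Noether locus $W^1_d(X)$ is nonempty on a general curve (the ordinary Brill--Noether number $\rho=g-2(g-d+1)$ being $\ge 0$ is implied once we also know $\rho_{mov}\ge0$, since $\rho=\rho_{mov}+(m_1-1)$). The real content is the moving-point inequality $\rho_{mov}=1+g-2(g-d+1)-(m_1-1)\ge0$, which should be read as saying that after imposing the single ramification condition $(0,m_1)$ at one point, there is still a positive-dimensional family of pairs $(L,p)$, so in particular the ramification point is free to move and we are not forcing it to sit at a distinguished (e.g. Weierstrass-type) location.

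The key step is a dimension count on the universal pointed Brill--Noether variety. I would set $r=1$ and consider the space $\mathcal{G}$ of pairs $(p, L)$ with $L$ a $g^1_d$ having vanishing sequence at least $(0,m_1)$ at $p$; equivalently this is the total space of the family $\{\mathcal{G}^1_d(0,m_1)\ \text{at}\ p\}$ as $p$ ranges over $X$. Its expected dimension is $1+\rho$, where $\rho=g-\sum_{i=0}^1(m_i-i+g-d+r)$ is the adjusted (fixed-point) Brill--Noether number from Theorem \ref{fixex}; here $m_0=0$, $m_1=m_1$, $r=1$, so a direct substitution gives $1+\rho=\rho_{mov}$. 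Thus the hypothesis $\rho_{mov}\ge0$ is exactly the statement that this universal space has nonnegative expected dimension. The forward map is then to produce a point of $\mathcal{G}$ by first choosing a suitable $p$ and then invoking the existence half of Theorem \ref{fixex} to get the linear series at that fixed $p$, checking that the existence number $\rho_+$ for $(0,m_1)$ is nonnegative under our hypotheses so that Theorem \ref{fixex} actually applies.

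Concretely, I would carry out the steps in this order. Step one: record $\rho=\rho_{mov}-1$ and $\rho_+=g-\sum_{m_i-i+g-d+1\ge0}(m_i-i+g-d+1)$, and verify the sign conditions, namely that $\rho_+\ge0$ follows from $g-d+1\ge0$ together with $\rho_{mov}\ge0$ (the terms dropped in passing from $\rho$ to $\rho_+$ are the nonnegative ones, so $\rho_+\ge\rho=\rho_{mov}-1\ge -1$, and one argues the two small-$\rho$ cases separately). Step two: apply Theorem \ref{fixex} at a general fixed $p$ to conclude $\mathcal{G}^1_d(0,m_1)$ at $p$ is nonempty, giving the desired $g^1_d$ with vanishing sequence $(0,m_1)$ at that $p$. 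Step three: conclude that such a $p$ can be taken to vary, i.e. that the ramification point is genuinely movable, which is what $\rho_{mov}\ge0$ guarantees via the dimension of the universal space; since we only claim existence of \emph{some} $p$, this last point is essentially free once existence at a general fixed point is known.

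The main obstacle I anticipate is the boundary bookkeeping when $\rho_{mov}=0$ (equivalently $\rho=-1$), where Theorem \ref{fixex} as stated gives existence from $\rho_+\ge0$ but its dimension statement ($\mathcal{G}^1_d(0,m_1)$ empty if $\rho<0$) would seem to threaten emptiness. The resolution is that the fixed-point locus can be empty for a \emph{general} fixed $p$ while the \emph{universal} (movable-point) locus remains nonempty of dimension $1+\rho=0$; so the delicate part is to separate the fixed-$p$ count from the moving-$p$ count and to confirm that the existence half of Theorem \ref{fixex}, whose hypothesis is the weaker $\rho_+\ge0$ rather than $\rho\ge0$, already supplies a series for suitable special $p$ even when $\rho<0$. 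Handling this edge case cleanly — and checking that the point produced is general enough that $p$ is not forced to be a Weierstrass point, as needed in the applications in Lemmas \ref{lemma5.2} and \ref{lemma5.3} — is where the care is required; the rest is routine substitution into the Brill--Noether numerics.
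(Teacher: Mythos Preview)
The paper does not prove this theorem; it simply cites Lehman's thesis (\cite{L}, theorem 3.3-4 and example 3.3-8), whose argument proceeds by degeneration and limit linear series rather than by reduction to the fixed-point statement. So your approach is genuinely different from what the paper invokes.

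Your reduction to Theorem~\ref{fixex} is sound when $\rho_{mov}\ge1$: then the fixed-point number $\rho=\rho_{mov}-1\ge0$, the existence half of Theorem~\ref{fixex} applies at a general $p$, and you are done. The problem is the boundary case $\rho_{mov}=0$. Under the hypothesis $g-d+1\ge0$, both Schubert indices $m_0-0+g-d+1=g-d+1$ and $m_1-1+g-d+1=m_1+g-d$ are nonnegative (the latter because $m_1\ge1$), so no terms are dropped in passing from $\rho$ to $\rho_+$; hence $\rho_+=\rho=\rho_{mov}-1=-1$. Your parenthetical ``the terms dropped \ldots\ are the nonnegative ones'' has it backwards, and in any event yields only $\rho_+\ge-1$, not $\rho_+\ge0$. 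With $\rho_+=-1$, Theorem~\ref{fixex} gives no existence at a general $p$, and since it is stated only for a general fixed point it says nothing about special $p$ either. Your proposed rescue, ``the existence half \ldots\ already supplies a series for suitable special $p$ even when $\rho<0$,'' therefore has no content: you would need an entirely separate argument (e.g.\ a degeneration tracking a movable marked point, which is precisely what Lehman does) to produce the required $(p,L)$.

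This is not a harmless edge case: the single application of Theorem~\ref{movex} in the paper is in Lemma~\ref{lemma5.2}, where $g=l$, $d=l-1$, $m_1=l-2$, giving $\rho_{mov}=1+l-4-(l-3)=0$. So the gap in your argument lands exactly on the case that matters.
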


\begin{proof}
See \cite{L} theorem $3.3$-$4$ and example $3.3$-$8$.
\end{proof}

\section{appendix}

We give a special treatment for the case of genus $10$ curves $C$ and a complete $g^4_{12}$ $|L|$ on $C$. We include this case here because it does not follow from the general discuss in section 4 (we need $l\ge6$ in lemma \ref{lemma5.2}) and there is very interesting geometry behind this example.

Since $\rho=g-(r+1)(g-d+r)=0$ in this case, by Brill-Noether Theorem, a general genus $10$ curve $C$ has only finitely many $g^4_{12}$s, and each  $g^4_{12}$ on $C$ is very ample. It is also known that for general such $C$, there exists some $g^4_{12}$ $|L|$ such that $\mu^2$ is not injective, i.e. $C$ is contained in some quadric hypersurface in $\mathbb{P}^4$ under the embedding of $|L|$, if and only if $C$ is contained in some $K3$ surface (see \cite{FP}). By proposition $2.2$ of \cite{CU}, the locus $\mathcal{K}$ in $\mathcal{M}_{10}$ consisting of curves contained in some $K3$ surface is a divisor. (Interestingly, Farkas and Popa \cite{FP} proved that $\mathcal{K}$ is a counter example for the slope conjecture.) Thus, for a general genus $10$ curve $C$,
and any $g^4_{12}$ $|L|$ on $C$, the multiplication map $\mu^2$ is injective (therefore an isomorphism, since the domain and range of $\mu^2$ are of the same dimension). 

Here we will use the results in the previous section to give a proof of this fact without using the geometry of curves contained in $K3$ surfaces.

\begin{proof}
    Notation the same as theorem \ref{new}. Take $X$ and $Y$ both general curves of genus $5$ meeting at a general point $p$. Take $L_X=K_X(-2p)=g^2_6$, $L_Y=K_Y(-2p)$ (which is smoothable).
    We will check $\kappa$ in (\ref{kappa}) is surjective.
    Consider the following exact sequence of sheaves:
    $$\xymatrix{0\ar[r]&\mathcal{O}_{C_0'}(2)\ar[r]&\pi_*\mathcal{O}_{C_0}(2)\ar[r]&\bigoplus_{k=1}^5(\mathbb{C}_{p_k}\oplus \mathbb{C}_{q_k})\ar[r]&0}.$$
    Where $\pi: C_0\longrightarrow C_0' \subset\mathbb{P}^4$ is the map given by the linear series $|L_0|$. $C_0'=X'\cup Y'$ is the image of $\pi$ consisting of two degree $6$ plane curves with $5$ nodes, and $\bigoplus_{k=1}^5(\mathbb{C}_{p_k}\oplus \mathbb{C}_{q_k})$ is the skyscraper sheaf of rank $10$ supported on the five nodes of $X'$ and five nodes on $Y'$.
    
    Taking the long exact cohomology sequence, we obtain
    
    $$\xymatrix{0\ar[r]&H^0(\mathcal{O}_{C_0'}(2))\ar[r]&H^0(\mathcal{O}_{C_0}(2))\ar[r]^{\phi\ \ \ \ }&\bigoplus_{k=1}^5(\mathbb{C}_{p_k}\oplus \mathbb{C}_{q_k})\ar[r]&H^1(\mathcal{O}_{C_0'}(2))\ar[r]&0}$$
    
    From the above exact sequence we see that a section $s\in H^0(L^2_0)$ is not coming from pull back of $H^0(\mathcal{O}_{\mathbb{P}^4}(2))$ if and only if $\phi(s)$ is not zero in $\mathbb{C}^{10}$, i.e. $s$ separate at least one node on $C_0'$.
    
    Now, choose $\sigma_i$ sections of $L_{1}$, $\sigma_j$ sections of $L_{-1}$ according to their value at the inverse image under $\pi$ of the ten nodes on $C_0'$ as table 1 below. Where $i=0,1$, $j=2,3$, $p_k'$, $p_k''$ (resp. $q_k'$, $q_k''$) are points in $X$ (resp. $Y$) which gets mapped to the node $p_k$ (resp. $q_k$) on $X'$ (resp. $Y'$). Here it suffices to consider two nodes on each component, say $k=1,2$. Cross means that $\sigma_i$ does not vanish at the corresponding point, 0 means vanishing. For instance, $\sigma_0$ is a section of $L_1$, such that $\sigma_0|_X$ is a section of the $g^3_7=K_X(-p)$ on $X$ that vanish on $p_1''$ and $p_2''$ but not on $p_1'$ or $p_2'$ (Although the $g^2_6=K_X(-2p)$ does not separate $p_1'$, $p_1''$ or $p_2'$, $p_2''$, the $g^3_7$ does). $\sigma_0|_Y$ is a section of the $g^1_5=K_Y(-3p)$ on Y that vanishes on $q_1'$, $q_1''$, but not on $q_2'$ or $q_2''$. Similarly, for other $\sigma_i$. By the generality of $X$, $Y$ and $p$, the assigned value in table $1$ can be achieved. 
    
     \begin{table}[h]

\caption{}
\begin{tabular}{c|cc|cc|cc|cc}\hline
&$p_1'$&$p_1''$&$p_2'$&$p_2''$&$q_1'$&$q_1''$&$q_2'$&$q_2''$\\
\hline
$\sigma_0$&$\times$&$0$&$\times$&$0$&$\times$&$\times$&$0$&$0$\\
$\sigma_1$&$\times$&$0$&$\times$&$0$&$0$&$0$&$\times$&$\times$\\
$\sigma_2$&$\times$&$\times$&$0$&$0$&$\times$&$0$&$\times$&$0$\\
$\sigma_3$&$0$&$0$&$\times$&$\times$&$\times$&$0$&$\times$&$0$\\
\hline
   \end{tabular}
\end{table}
    
    Table $2$ describes the difference of $\sigma_i\sigma_j$ at $p_k'$, $p_k''$ and $q_k'$, $q_k''$ for $k=1,2$.
    \begin{table}[h]
    \caption{}
    \begin{tabular}{c|cccc}\hline
    &$p_1$&$p_2$&$q_1$&$q_2$\\\hline
    $\phi(\sigma_0\sigma_2)$&$\times$&$0$&$\times$&$0$\\
    $\phi(\sigma_0\sigma_3)$&$0$&$\times$&$\times$&$0$\\
    $\phi(\sigma_1\sigma_2)$&$\times$&$0$&$0$&$\times$\\
    $\phi(\sigma_1\sigma_3)$&$0$&$\times$&$0$&$\times$\\
    \hline
    \end{tabular}
    \end{table}

    From table 2, we get a matrix of rank at least 3. Thus, $\Im(\delta_1)$ is mapped under $\phi$ to a subspace of dimension at least $3$ in $\mathbb{C}^{10}$. In other words, we have shown that $C_t$ for $t\neq0$ is contained in at most one quadric in $\mathbb{P}^4$.
    
It remains to show that we get an extra dimension from
$$\Im(H^0(L_2)\otimes H^0(L_{-2})\longrightarrow H^0(L_0^2)).$$ 
    
 Choose $\lambda_1\in H^0(L_2)$, $\lambda_2\in H^0(L_{-2})$ according to table 3,
    
   \begin{table}[h]

\caption{}
\begin{tabular}{c|cc|cc|cc|cc}\hline
&$p_1'$&$p_1''$&$p_2'$&$p_2''$&$q_1'$&$q_1''$&$q_2'$&$q_2''$\\
\hline

$\lambda_1$&$\times$&$0$&$0$&$0$&$\times$&$\times$&$\times$&$\times$\\
$\lambda_2$&$\times$&$\times$&$\times$&$\times$&$\times$&$0$&$0$&$0$\\
\hline
   \end{tabular}
\end{table}
    
We get one more vector $\phi(\lambda_1\lambda_2)$ in $\mathbb{C}^{10}$. So we can add one row to the matrix in table 2, to get table 4

 \begin{table}[h]
    \caption{}
    \begin{tabular}{c|cccc}\hline
    &$p_1$&$p_2$&$q_1$&$q_2$\\\hline
    $\phi(\sigma_0\sigma_2)$&$\times$&$0$&$\times$&$0$\\
    $\phi(\sigma_0\sigma_3)$&$0$&$\times$&$\times$&$0$\\
    $\phi(\sigma_1\sigma_2)$&$\times$&$0$&$0$&$\times$\\
    $\phi(\sigma_1\sigma_3)$&$0$&$\times$&$0$&$\times$\\
    $\phi(\lambda_1\lambda_2)$&$\times$&$0$&$\times$&$0$\\
    \hline
    \end{tabular}
    \end{table}

To show the matrix in table $4$ has rank $4$, it suffices to show that the first row and the last row can be chosen linearly independently, or equivalently, that 
$$\frac{\sigma_0|_X\cdot\sigma_2|_X}{\lambda_1|_X\cdot\lambda_2|_X}(p_1')\neq\frac{\sigma_0|_Y\cdot\sigma_2|_Y}{\lambda_1|_Y\cdot\lambda_2|_Y}(q_1').$$

This can be easily achieved, for instance, as follows. Take $X=Y$ and $X$, $Y$ meeting at the same point $p\in X=Y$ and $q_1=p_3$, $q_2=p_2$. Choose $\sigma_0|_X=\sigma_2|_Y$, $\sigma_0|_Y=\sigma_2|_X$ $\lambda_1|_X=\lambda_2|_Y$ and $\lambda_1|_Y=\lambda_2|_X$ as the unique (up to scalar) sections satisfying the conditions in table $5$:
 
\begin{table}[h]
    \caption{}
    \begin{tabular}{c|cc|cc|cc}\hline
    &$p_1'$&$p_1''$&$p_2'$&$p_2''$&$p_3'$&$p_3''$\\\hline
    $\sigma_0|_X=\sigma_2|_Y$&$\times$&$0$&$\times$&$0$&$\times$&$0$\\
    $\sigma_0|_Y=\sigma_2|_X$&$\times$&$\times$&$0$&$0$&$\times$&$\times$\\
    $\lambda_1|_X=\lambda_2|_Y$&$\times$&$0$&$0$&$0$&$\times$&$0$\\
    $\lambda_1|_Y=\lambda_2|_X$&$\times$&$\times$&$\times$&$\times$&$\times$&$\times$\\
    \end{tabular}
    \end{table}
    
    Then 
    $$\frac{\sigma_0|_X\cdot\sigma_2|_X}{\lambda_1|_X\cdot\lambda_2|_X}=\frac{\sigma_0|_Y\cdot\sigma_2|_Y}{\lambda_1|_Y\cdot\lambda_2|_Y}$$
    as rational functions, but since everything is general, 
    $$\frac{\sigma_0|_X\cdot\sigma_2|_X}{\lambda_1|_X\cdot\lambda_2|_X}(p_1')\neq\frac{\sigma_0|_Y\cdot\sigma_2|_Y}{\lambda_1|_Y\cdot\lambda_2|_Y}(p_3').$$
    
    In conclusion, we can arrange so that the rank of the matrix in table $4$ is exactly $4$ and therefore the image in $\mathbb{P}^4$ of a general genus $10$ curve under a general (thus every) $g^4_{12}$ is not contained in any quadric. 

\end{proof}

\begin{corollary} \label{cor}
 For $g>10$, a general curve in $\mathbb{P}^4$ with degree $d\ge\frac{4}{5} g+4$ is not contained in any quadric. 

\end{corollary}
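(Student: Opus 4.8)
The plan is to reduce everything to the case $g=10$, $d=12$ just treated, via a degeneration of the general curve to one that literally \emph{contains} a genus-$10$ degree-$12$ curve lying on no quadric. First I would reformulate the assertion. For $C\subset\mathbb{P}^4$ embedded by a complete $g^4_d$ one has $h^0(C,L)=5$, so $\dim_{\mathbb{C}}\Sym^2H^0(L)=15$, and $C$ lies on a quadric exactly when the restriction map $H^0(\mathbb{P}^4,\mathcal{O}(2))\to H^0(C,L^2)$, i.e. $\mu^2$, has nonzero kernel; equivalently $h^0(\mathbb{P}^4,\mathcal{I}_C(2))=\dim_{\mathbb{C}}\Ker(\mu^2)>0$. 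The hypothesis $d\ge\frac{4}{5}g+4$ is precisely $\rho(g,4,d)=5d-4g-20\ge0$, so by Brill--Noether theory such curves exist and fill out the component of the Hilbert scheme dominating $\mathcal{M}_g$; since $h^0(\mathcal{I}_C(2))=0$ is an open condition, it suffices to exhibit a single curve in that component with $h^0(\mathcal{I}_C(2))=0$.

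The key point is a monotonicity of ideal sheaves. If a connected nodal curve $C_0\subset\mathbb{P}^4$ \emph{contains} a genus-$10$ degree-$12$ curve $C_{10}$ embedded by a $g^4_{12}$, then $\mathcal{I}_{C_0}\subseteq\mathcal{I}_{C_{10}}$, whence
$$h^0(\mathbb{P}^4,\mathcal{I}_{C_0}(2))\le h^0(\mathbb{P}^4,\mathcal{I}_{C_{10}}(2))=0,$$
the vanishing being exactly the $g=10$ statement proved above. Because $t\mapsto h^0(\mathcal{I}_{C_t}(2))$ is upper semicontinuous in any flat family, a flat smoothing $\{C_t\}$ of such a $C_0$ satisfies $h^0(\mathcal{I}_{C_t}(2))\le h^0(\mathcal{I}_{C_0}(2))=0$ for general $t$; that is, the general smooth fibre lies on no quadric. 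So the whole corollary is reduced to producing, for each admissible $(g,d)$, a suitable $C_0\supseteq C_{10}$ together with a smoothing landing in the dominant Hilbert component.

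For the construction I would take $C_0=C_{10}\cup R$, with $C_{10}$ a general genus-$10$ $g^4_{12}$-curve and $R$ a general rational curve of degree $d-12$ meeting $C_{10}$ transversally at $\delta$ points. The numerology is forced: $p_a(C_0)=10+\delta-1$ must equal $g$, so $\delta=g-9$, while $\deg C_0=12+(d-12)=d$; note $d-12\ge1$ since $d\ge13$ throughout the range. The expected dimension of the family of degree-$(d-12)$ rational curves that are $(g-9)$-secant to $C_{10}$ in $\mathbb{P}^4$ is $5(d-12)+1-2(g-9)$, which is nonnegative precisely when $5d\ge2g+41$; as $5d\ge4g+20$ and $g\ge11$ this holds, so such an $R$ exists and $C_0$ is a connected nondegenerate nodal curve of the required genus and degree containing $C_{10}$.

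The hard part will be the final smoothing step. I would need $H^1(C_0,N_{C_0/\mathbb{P}^4})=0$, so that $C_0$ is an unobstructed point of the Hilbert scheme and its nodes smooth independently to a genuinely smooth curve; this is computed from the two restriction sequences for $N_{C_0}$ along $C_{10}$ and $R$ together with the node data, using that $C_{10}$ is projectively normal and that $R$ is a general rational curve of positive degree. More delicate is to check that the resulting smoothing is Brill--Noether general, i.e. that it lies on the component dominating $\mathcal{M}_g$ carrying a general $g^4_d$ rather than on an auxiliary component; here one invokes the standard regeneration principle that a general curve of genus $g$ arises by smoothing a general lower-genus curve with attached rational curves, so that the genericity of $C_{10}$ and of $R$ propagates to $C_t$. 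Granting this, the monotonicity and semicontinuity above complete the argument.
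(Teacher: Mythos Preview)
Your overall strategy---reduce to the $g=10$, $d=12$ case via a degeneration containing a genus-$10$ curve on no quadric, then invoke upper semicontinuity of $h^0(\mathcal{I}_{C_t}(2))$---is the same as the paper's. The construction, however, is different. The paper attaches to the general genus-$10$ curve $X$ a general genus-$(g-10)$ curve $Y$ at a \emph{single} general point $p$, and works with a limit $g^4_d$ whose $X$-aspect is $(d-12)p+|g^4_{12}|$ and whose $Y$-aspect is $8p+|g^4_{d-8}|$; the Eisenbud--Harris smoothing theorem then applies because the space of limit series with these ramification data has exactly the expected dimension $\rho(g,4,d)$. In that picture the limit map to $\mathbb{P}^4$ collapses $Y$ to a point and sends $X$ in via its $g^4_{12}$, so the flat limit in $\mathbb{P}^4$ is literally the genus-$10$ image, and one is done.

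The advantage of the paper's route is that smoothability \emph{and} landing in the dominant component come for free from limit linear series theory: the dimension count $\rho(g-10,4,d-8)+\rho(10,4,12)=\rho(g,4,d)$ is the entire verification. Your construction with a $(g-9)$-secant rational curve $R$ of degree $d-12$ trades that single citation for three genuine obligations you yourself flag: (i) that such an $R$ exists (the expected-dimension count $5(d-12)+1-2(g-9)\ge0$ is necessary but does not by itself guarantee nonemptiness of the secant locus); (ii) that $H^1(C_0,N_{C_0/\mathbb{P}^4})=0$; and (iii) that the smoothing is Brill--Noether general. None of these is false, and each can probably be pushed through with standard Hilbert-scheme techniques, but none is actually proved in your write-up---you describe what would have to be checked rather than checking it. So as written this is a plausible outline rather than a proof; the paper's one-node limit-linear-series degeneration is the cleaner way to close the argument.
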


\begin{proof}
Consider the curve consists of a general curve $X$ of genus $10$ and a general curve $Y$ of genus $g-10$ meeting at a general point $p$. Consider the limit linear series with aspects $V_X=(d-12)p+|g^4_{12}|$, $V_Y=8p+|g^4_{d-8}|$. Since everything is general, $V_X$ has vanishing sequence $(8,9,10,11,12)$ at $p$, and $V_Y$ has vanishing sequence $(d-12,d-11,...,d-8)$. Since limit $g^4_d$ on $X\cup Y$ with the above specified vanishing sequence have dimension  $\rho(g-10, 4, d-8)+\rho(10, 4, 12)=(g-10)-5(g-10-(d-8)+4)+0=g-5(g-d+4)=\rho(g, 4, d)$, by the smoothing theorem of limit linear series (see \cite{EH1}), $V_X$, $V_Y$  are smoothable. On the other hand, the image of $X$ in $\mathbb{P}^4$ under $\phi_{V_X}$ is not contained in any quadric and $V_X|_Y$ is a $|g^0_{d-12}|$. Thus, by degenerating to such limit $g^4_{12}$ on $X\cup Y$, we have our conclusion.

\end{proof}

\end{document}